\def\R{{\mathbb R}}	
\def\E{{\mathcal E_\gamma}}  
\def\T{{\mathbb T}}
\def\N{{\mathbb N}}
\def\Z{{\Bbb Z}}
\def\F{{\mathcal F}}	
\def \supp {\text{\rm supp\,}}
\def\trans{{\,}^t}
\newtheorem{thmnr}{Theorem}[section]
\newtheorem{lemnr}[thmnr]{Lemma}
\newtheorem{pronr}[thmnr]{Proposition}
\newtheorem{remark}[thmnr]{Remark}
\newtheorem{remarks}[thmnr]{Remarks}
\newtheorem{defns}{Definitions}[section]
\newcommand{\secret}[1]{}
\begin{document}

\title[Fourier restriction for negative curvature: polynomial partitioning]{A Fourier
restriction
theorem for a perturbed hyperbolic paraboloid: polynomial partitioning}

\author[S. Buschenhenke]{Stefan Buschenhenke}
\address{S. Buschenhenke:  Mathematisches Seminar, C.A.-Universit\"at Kiel,
Ludewig-Meyn-Stra\ss{}e 4, D-24118 Kiel, Germany}
\email{{\tt buschenhenke@math.uni-kiel.de}}
\urladdr{http://www.math.uni-kiel.de/analysis/de/buschenhenke}

\author[D. M\"uller]{Detlef M\"uller}
\address{D. M\"uller: Mathematisches Seminar, C.A.-Universit\"at Kiel,
Ludewig-Meyn-Stra\ss{}e 4, D-24118 Kiel, Germany}
\email{{\tt mueller@math.uni-kiel.de}}
\urladdr{http://www.math.uni-kiel.de/analysis/de/mueller}

\author[A. Vargas]{Ana Vargas}
\address{A. Vargas: Departmento de Mathem\'aticas, Universidad Aut\'onoma de  Madrid,
28049
Madrid,
Spain
}
\email{{\tt ana.vargas@uam.es}}
\urladdr{http://matematicas.uam.es/~AFA/}

\thanks{2010 {\em Mathematical Subject Classification.}
42B25}
\thanks{{\em Key words and phrases.}
hyperbolic  hypersurface, Fourier restriction}
\thanks{The first author was partially supported by the ERC grant 307617.\\
The first two authors were partially supported by the DFG grants MU 761/ 11-1 and MU 761/
11-2.\\
The third author was partially supported grant
MTM2016-76566-P, Ministerio de Ciencia, Innovaci\'on y Universidades (Spain).}

\begin{abstract} We consider a surface with negative curvature in $\R^3$ which  is a cubic
perturbation of the
saddle. For this surface, we prove a new restriction theorem, analogous to the theorem for
paraboloids proved by
L. Guth  in 2016. 
 This specific perturbation has turned out to be of
  fundamental importance also to the understanding of more general classes of
  perturbations.
\end{abstract}

\maketitle

\tableofcontents

\thispagestyle{empty}

\setcounter{equation}{0}
\section{Introduction}\label{intro}

The Fourier restriction problem, introduced by E. M. Stein in the seventies (for general submanifolds), asks for  the range  of exponents
$\tilde p$ and $\tilde q$ for which  an a priori  estimate of the form
\begin{align*}
\bigg(\int_S|\widehat{f}|^{\tilde q}\,d\sigma\bigg)^{1/\tilde q}\le C\|f\|_{L^{\tilde
p}(\R^n)}
\end{align*}
holds  true for   every Schwartz function   $f\in\mathcal S(\R^3),$ with a constant $C$
independent of $f.$ Here, $d\sigma$ denotes the surface measure on $S.$

The sharp range in dimension  $n=2$ for curves with non-vanishing curvature was determined
through work by  C. Fefferman, E. M. Stein and A. Zygmund \cite{F1}, \cite{Z}. In higher
dimension, the sharp $L^{\tilde p}-L^2$ result  for hypersurfaces with
non-vanishing Gaussian curvature was obtained by E. M. Stein and P. A. Tomas \cite{To},
\cite{St1} (see also Strichartz \cite{Str}). Some more general classes of surfaces were
treated by A. Greenleaf \cite{Gr}. In work by I. Ikromov, M.
Kempe and D. M\"uller  \cite{ikm}  and Ikromov and M\"uller \cite{IM-uniform}, \cite{IM},
the sharp range of Stein-Tomas type    $L^{\tilde p}-L^2$  restriction
estimates has been  determined  for a large class  of smooth, finite-type hypersurfaces,
including all analytic hypersurfaces.

The question about  general  $L^{\tilde p}-L^{\tilde q}$ restriction
estimates is nevertheless still wide open. Fourier restriction to hypersurfaces  with non-negative principal curvatures has been studied intensively by many authors. Major progress was due to J. Bourgain in the nineties (\cite{Bo1}, \cite{Bo2}, \cite{Bo3}). At the end of that decade the bilinear method was introduced (\cite{MVV1}, \cite{MVV2}, \cite{TVV} \cite{TV1}, \cite{TV2},  \cite{W2}, \cite{T2},  \cite{lv10}). A new impulse to the problem has been given with  the multilinear method (\cite{BCT}, \cite{BoG}). The best results up to date have been obtained with the polynomial partitioning method, developed by L. Guth (\cite{Gu16},
\cite{Gu17}) (see also \cite{hr19} and \cite{WA18} for recent improvements).

For the case of hypersurfaces of  non-vanishing  Gaussian curvature but principal curvatures of different signs, besides Tomas-Stein type Fourier restriction estimates, until recently  the only case which had been studied successfully was the case of the hyperbolic paraboloid (or ``saddle") in  $\R^3$:
in 2015, independently S. Lee \cite{lee05} and A. Vargas \cite{v05}  established results analogous to Tao's theorem \cite{T2} on elliptic surfaces (such as the $2$ -sphere), with the exception of the  end-point, by means of the bilinear method.  Recently, B. Stovall
\cite{Sto17}  was able to include also the end-point case. Moreover,  C. H. Cho and J. Lee \cite{chl17}, and
J. Kim \cite{k17},  improved the range by adapting ideas by Guth  \cite{Gu16},
\cite{Gu17} which are based on the polynomial partitioning method.  Results on higher dimensional hyperbolic paraboloids  have just been reported  by A. Barron \cite{ba20}.

\medskip

In our previous paper \cite{bmv18}, we considered a one variable perturbation of the hyperbolic paraboloid, and applied the bilinear method, obtaining results analogous to \cite{lee05}  and \cite{v05}.  Further results for more general classes of one-variate finite type,  respectively flat,  perturbations based on the bilinear method were obtained in \cite{bmv19}, \cite{bmv20}. Bilinear estimates are also key elements in the results  obtained with the polynomial partitioning method for the non--negative curvature case. With the base of our previous bilinear results, we explore in this article the application of that method to our model surfaces. We obtain the analogous result to \cite{Gu16} for our class of  hyperbolic surfaces.

\medskip
More precisely, we consider the family of functions
$$
\phi_\gamma(x,y)=xy+\frac\gamma 3y^3\qquad\text{for } -1\le \gamma\le1,
$$
defined on   $\Sigma:=[0,1]\times [0,1],$ and the corresponding surfaces
$$
\bf S_\gamma=\{(x,y,\phi_\gamma(x,y)):  (x,y)\in \Sigma\}.
$$
The associated adjoints to the corresponding Fourier restriction operators are the {\it extension} operators   given by
$$
\E(\xi):=\int_{\Sigma} f(x,y)\,e^{i[\xi_1x+\xi_2y+\xi_3\phi_\gamma(x,y)]}\,dx\,dy,\qquad \xi=(\xi_1,\xi_2,\xi_3)\in\R^3.
$$

Our main result will be the following analogue of   a result by Bassam Shayya \cite{s17}  (see also Jongchon Kim \cite{k17}) for the unperturbed hyperbolic paraboloid:
\begin{thmnr}\label{sharp}
For any $p>3.25$ with  $p>2q',$  there is a constant $C_{p,q}$ which is independent of  $\gamma\in [-1,1]$ such that
\begin{align*}
		\|\E f\|_{L^{p}(\R^3)} \leq C_{p,q}
\|f\|_{L^{q}(\Sigma)}
\end{align*}
for all $f\in L^{q}(\Sigma)$.
\end{thmnr}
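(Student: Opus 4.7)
The proof follows the polynomial partitioning template of Guth \cite{Gu16, Gu17}, adapted to the unperturbed hyperbolic paraboloid by Kim \cite{k17} and Shayya \cite{s17}, but replacing Tao's and Wolff's bilinear restriction theorems with the uniform-in-$\gamma$ bilinear estimates for $\E$ established in our previous papers \cite{bmv18, bmv19, bmv20}. By a standard $\varepsilon$-removal argument, it is enough to show that for every small $\varepsilon>0$ there is a constant $C_\varepsilon$, independent of $\gamma\in[-1,1]$, such that
\begin{align*}
\|\E f\|_{L^p(B_R)} \leq C_\varepsilon R^\varepsilon \|f\|_{L^q(\Sigma)}
\end{align*}
for every ball $B_R\subset \R^3$ of radius $R$. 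This local estimate I would prove by a double induction: on the radius $R$, and on the $L^q$-mass of $f$.

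Next I would perform a wave packet decomposition at scale $R$: write $\E f = \sum_T \E f_T$, where each wave packet $\E f_T$ is essentially supported on a tube $T$ of dimensions $R^{1/2+\delta}\times R^{1/2+\delta}\times R$ whose long axis is the normal to $\bf S_\gamma$ over an $R^{-1/2}$-cap $\theta(T)\subset\Sigma$. Guth's polynomial partitioning lemma, applied to $|\E f|^p\mathbf{1}_{B_R}$, produces a polynomial $P$ of degree $D$ whose zero set $Z(P)$ partitions $B_R$ into $\sim D^3$ cells of comparable $L^p$-mass. In the \emph{cellular case}, where most of the mass lies inside the cells, each tube meets at most $D+1$ of them (a line meets $Z(P)$ in at most $D$ points); summing $L^q$-norms over cells with the choice $D=R^{\epsilon_0}$ for small $\epsilon_0$ and applying the inductive hypothesis produces the desired bound with acceptable loss.

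In the \emph{algebraic case}, a substantial fraction of the $L^p$-mass lies in the $R^{1/2+\delta}$-neighborhood of $Z(P)$. I would then separate \emph{transverse} wave packets, whose tubes cross $Z(P)$ at angles bounded below, from \emph{tangential} ones, whose tubes stay in the neighborhood of $Z(P)$ along their entire length. Transverse packets admit a recursive partitioning that closes via the induction on $R$. For tangential packets, the tubes cluster near a two-dimensional algebraic variety, and one applies a broad--narrow decomposition on a fine grid of small cubes: the broad part is controlled by the bilinear estimate for $\E$ from \cite{bmv18}, while the narrow part is treated by a recursive partitioning of the reduced cap family plus a second induction on the number of caps. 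The uniformity of the bilinear input in $\gamma\in[-1,1]$ propagates through every step, producing the stated uniform constant $C_{p,q}$.

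The principal obstacle lies in the algebraic/tangential case. In the elliptic setting, the Gauss map of a paraboloid lands in an open piece of a sphere, so tangent planes vary in a definite manner that meshes cleanly with tangency to algebraic varieties. For our hyperbolic $\bf S_\gamma$ the Gauss image is instead a portion of a hyperboloid of one sheet, and tangent planes form a Lorentzian-type distribution, so tangential wave packets are potentially less well-organized than in Guth's setting. Moreover, the cubic perturbation $\frac{\gamma}{3}y^3$ breaks the Lorentz-boost symmetry enjoyed by the pure saddle $\gamma=0$, so one cannot simply import the arguments of \cite{k17, s17}. Verifying that the tangential wave packets still satisfy the transversality and two-ends hypotheses of the bilinear estimate---uniformly in $\gamma$---demands a careful quantitative analysis of the contact between $\bf S_\gamma$ and algebraic varieties of degree $D$, and constitutes the main technical burden of the argument.
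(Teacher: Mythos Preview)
Your outline follows the right high-level template, but it misses the two specific innovations needed for the perturbed hyperbolic case and misidentifies where the difficulty lies.

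First, the notion of ``broad'' cannot be the cap-based one from \cite{Gu16}. The paper defines $\alpha$-broadness relative to a family of \emph{strips}: long horizontal strips of width $K^{-1/4}$, and---only when $|\gamma|K^{1/2}<1$---also long vertical strips of width $K^{-1/2}$ and their intersections. This broad/narrow split sits at the top level (reducing Theorem~\ref{mainresult} to Theorem~\ref{broadtheorem}), not inside the tangential case as you suggest. The reason for strips is that the narrow case is closed by parabolic rescaling of a single strip back to $\Sigma$; horizontal rescaling sends $\gamma\mapsto\gamma K^{-1/2}$ while vertical rescaling sends $\gamma\mapsto\gamma K^{1/2}$, so the strip dimensions and the dichotomy on $|\gamma|K^{1/2}$ are forced by the requirement that the new parameter stay in $[-1,1]$. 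Your proposed handling of the narrow case (``recursive partitioning of the reduced cap family plus a second induction on the number of caps'') does not correspond to any mechanism that closes here. Relatedly, the induction is run on the mixed-norm estimate $\|\E f\|_{L^{3.25}(B_R)}\le C_\epsilon R^\epsilon\|f\|_2^{2/q}\|f\|_\infty^{1-2/q}$ precisely because the pure $L^\infty\to L^{3.25}$ induction of \cite{Gu16} does not survive the strip rescaling.

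Second, the bilinear input is \emph{not} the bilinear restriction theorem of \cite{bmv18}; it is a direct $L^4$ estimate on tangential wave packets (Lemma~\ref{lemma4.11}), in the style of Guth's Lemma~3.10, with no two-ends hypothesis. What makes it work is a bespoke notion of \emph{strongly separated} caps, requiring both $|y_2^c-y_1^c|\gtrsim \mu^{1/2}K^{-1}$ and $|{\bf t}^\gamma_{z_i^c}(z_1^c,z_2^c)|\gtrsim \mu^{1/2}K^{-1}$ for the transversality quantity ${\bf t}^\gamma$ of~\eqref{TV1}. The heart of the matter is then the Geometric Lemma~\ref{geometric}: any family of caps containing no strongly separated pair lies in $O(1)$ of the fixed ragged strips, so its contribution is absorbed by broadness in Lemma~\ref{lemma3.8}. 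This is where the $|\gamma|K^{1/2}$ dichotomy reappears and where the cubic perturbation genuinely enters.

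The obstacle you single out---quantitative contact between ${\bf S_\gamma}$ and degree-$D$ varieties, and the Lorentzian shape of the Gauss image---is not where any new work occurs. The Gauss map is a diffeomorphism (the Gaussian curvature never vanishes), so Guth's Lemma~3.6 bounding the number of tangential directions transfers verbatim. The genuine new content is the strip geometry and the Geometric Lemma.
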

\begin{remarks}\nonumber
 \begin{itemize}
\item[(i)] Note that in this  result and the corresponding Fourier restriction estimate we can replace the domain $\Sigma:=[0,1]\times [0,1]$ by the larger neighborhood $[-1,1]\times [-1,1]$ of the origin, simply by dividing the latter into four sectors of angle $\pi/2$ and reducing the corresponding estimates in each of these sectors to the estimate given in the theorem by means of symmetry considerations.
\item[ii)] Our arguments in this paper easily extend to more general perturbations of $xy$  of cubic type in the sense of \cite{bmv19} in place of the perturbation $\frac \gamma 3 y^3,$  and  the same reasoning as in \cite{bmv19} then allows to prove  Fourier restriction to surfaces  given as the graph of   $\phi(x,y):=xy+h(y),$ where the function  $h$ is smooth and of finite type at the origin,  in the same range of $p$'s and $q$'s as in Theorem \ref{sharp}.
 \end{itemize}
\end{remarks}

\medskip
To simplify the understanding of this paper, we will closely follow the notation and structure of the paper \cite{Gu16}, which makes use of induction on scales arguments.

\smallskip
Denote by $B_R$ the cube $B_R:=[-R,R]^3, \, R\ge 0.$
For technical reasons that will become clear soon we  shall not be able to  induct on an $L^\infty\to L^{3.25}$ estimate  for $\E$ as  in \cite{Gu16} (Theorem 2.2). Instead, we shall induct on the following statement:

\begin{thmnr}\label{mainresult}
For any $\epsilon>0,$  there is a constant $C_\epsilon$  such that  for any $\gamma\in [-1,1]$ and for any  $R\ge1$
\begin{align*}
		\|\E f\|_{L^{3.25}(B_R)} \leq C_\epsilon R^\epsilon \|f\|_{L^2(\Sigma )}^{2/q}\, \|f\|_{L^\infty(\Sigma)}^{1-2/q},
\end{align*}
for all $3.25\geq q >2.6$ and all $f\in L^\infty(\Sigma)$.
\end{thmnr}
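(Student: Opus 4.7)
The plan is to proceed by induction on the radius $R$, mirroring Guth's polynomial partitioning scheme from \cite{Gu16}. The base case $R=O(1)$ is immediate. For the inductive step fix $R$, assume the statement for all smaller radii, and normalize so that $\|f\|_{L^\infty(\Sigma)}=1$; it then suffices to prove
$$\|\E f\|_{L^{3.25}(B_R)} \le C_\epsilon R^\epsilon \|f\|_{L^2(\Sigma)}^{2/q}.$$
Decompose $\Sigma$ into caps $\theta$ of side $R^{-1/2}$ and write $\E f=\sum_T \psi_T$ as a sum of wave packets, each concentrated on a tube $T$ of width $R^{1/2+\delta}$ and length $R$ directed along the normal to $\mathbf S_\gamma$ over $\theta$.

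Apply the polynomial ham--sandwich theorem at a degree $D$ (a small power of $R$) to produce a real polynomial $P$ of degree $\le D$ whose zero set $Z$ partitions $B_R$ into $\sim D^3$ cells $O_i$ of comparable $L^{3.25}(\E f)$-mass. Let $W$ be the $R^{1/2+\delta}$-neighborhood of $Z$ and $O_i' := O_i\setminus W$. Either most of the $L^{3.25}$-mass concentrates in $\bigsqcup_i O_i'$ (the \emph{cellular case}) or in $W$ (the \emph{wall case}).

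In the cellular case, B\'ezout limits each tube $T$ to meet $\lesssim D$ of the $O_i'$, so that if $f^{(i)}$ denotes the sum of wave packets hitting $O_i'$ then $\sum_i \|f^{(i)}\|_{L^2}^2 \lesssim D\,\|f\|_{L^2}^2$. Applying the induction hypothesis cell by cell and combining with H\"older in the cell index closes the step, provided the arithmetic between $p=3.25$ and $q>2.6$ comes out favorably; this is precisely why the ranges in the statement are calibrated as they are. In the wall case, the tubes split into those \emph{tangent} to $Z$ throughout $B_R$ and those \emph{transverse} to $Z$: tangent tubes are confined to a thin algebraic neighborhood and their combinatorial count is small, so induction handles them, whereas transverse tubes are controlled via the bilinear extension estimates for $\mathbf S_\gamma$ proved in \cite{bmv18}, \cite{bmv19}, which furnish the hyperbolic replacement for the elliptic bilinear inputs used in \cite{Gu16}.

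The principal obstacle lies in the transverse case. On a hyperbolic surface, angular separation of two caps does not by itself imply transversality of the corresponding normal directions, so the bilinear estimates are applicable only after rescaling and further subdividing into scales that genuinely separate the two relevant pieces of $\mathbf S_\gamma$; the cubic perturbation $\tfrac{\gamma}{3} y^3$ introduces additional near-degenerate configurations that must be handled uniformly in $\gamma\in[-1,1]$. Reconciling the tangent/transverse bookkeeping of Guth's scheme with the multi-scale structure demanded by the hyperbolic bilinear inputs, and verifying that the numerology balances precisely at $p=3.25$ and $q>2.6$, will constitute the main technical content of the argument.
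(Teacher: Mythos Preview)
Your outline conflates two distinct layers of Guth's scheme and, as a result, misses the mechanism that actually drives the hyperbolic argument. The paper does \emph{not} run polynomial partitioning directly on Theorem~\ref{mainresult}. It first introduces a broad/narrow dichotomy at an intermediate scale $K^{-1}$ (Section~\ref{broadpoints}): a point $\xi$ is $\alpha$-broad if no single \emph{strip} $L\subset\Sigma$ dominates $|\E f(\xi)|$. The narrow contribution is handled by parabolic rescaling along the dominant strip, which shrinks $R$ and invokes the inductive hypothesis on Theorem~\ref{mainresult} itself; this is where the condition $p>2q'$ is used, and Lemma~\ref{aux} is needed to pass from cubes to plates. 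Only the broad part sees polynomial partitioning, packaged as a separate statement (Theorem~\ref{broadtheorem}, refined to Theorem~\ref{largetheorem}) with its own two-parameter induction on $R$ and on $\sum_\tau\|f_\tau\|_2^2$. In particular, your cellular step cannot close by induction on the radius alone, since the cells still live in $B_R$.

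The specifically hyperbolic point you allude to but do not locate is this: because the Gauss map of $\mathbf S_\gamma$ does not separate caps that are far apart on $\Sigma$, ``narrow'' cannot be defined via caps as in \cite{Gu16}. It must be defined via horizontal strips of width $K^{-1/4}$ and, when $|\gamma|K^{1/2}<1$, also vertical strips of width $K^{-1/2}$. The paper's new ingredient is the Geometric Lemma (Lemma~\ref{geometric}): any family of caps containing no two that are \emph{strongly separated} (in the sense governed by the transversality quantity ${\bf t}^\gamma_z$ of \eqref{TV1}--\eqref{TV3}) is contained in $O(1)$ such ragged strips. This is what forces broadness to produce a genuinely bilinear configuration, uniformly in $\gamma$.

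Finally, you have reversed the roles inside the wall. In Guth's scheme and here, the \emph{tangential} tubes are controlled by the bilinear estimate (Lemma~\ref{lemma4.11}, Proposition~\ref{prop3.9}), because tangency to $Z(P)$ bounds the number of directions (Lemma~\ref{lemma3.6}); the \emph{transverse} tubes are handled by induction on the radius, since each tube is transverse in at most $\mathrm{Poly}(D)$ of the balls $B_j$ (Lemma~\ref{lemma3.5}). Assigning the bilinear input to the transverse case does not close: the bilinear gain requires $|\Gamma^\gamma_z|$ in \eqref{transs} to be bounded below, and that lower bound is supplied by strong separation of caps via the broad-points mechanism, not by transversality to the algebraic variety.
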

Applying this estimate to characteristic functions, we obtain the estimate
\begin{align*}
		\|\E f\|_{L^{3.25}(B_R)} \leq C_\epsilon R^\epsilon \|f\|_{L^{q,1}(\Sigma )},
\end{align*}
for all $q >2.6.$ Real interpolation with the trivial $L^1\to L^\infty$ estimate for the extension operator then  gives
\begin{align*}
		\|\E f\|_{L^{p}(B_R)} \leq C_\epsilon R^\epsilon \|f\|_{L^{q}(\Sigma )},
\end{align*}
for all $p>3.25,$ $p>2q'.$ Finally, an $\epsilon$-removal theorem (Theorem 5.3 in \cite{k17}) gives Theorem \ref{sharp}.

\section{Broad points}\label{broadpoints}

\begin{defns}\label{strip-broad}{\rm
Fix $K\gg 1$ to be a large dyadic number.
We introduce four different partitions of  the square $\Sigma=[0,1]\times[0,1]:$
\smallskip

We divide $\Sigma$ into $K^2$ disjoint squares (called {\it caps}) $\tau$ of sidelength $K^{-1}.$ For a cap $\tau,$ we denote by  $f_\tau:=f\chi_\tau.$ This  {\it basic decomposition} into caps  will play a fundamental role  in many places of our subsequent arguments, as in \cite{Gu16}. However, in contrast to \cite{Gu16}, it will play no role in the definition of $\alpha$-broadness  given below. For the latter  notion, the  next three decompositions will be relevant:
\smallskip

We divide $\Sigma $ into $K^{1/4}$ disjoint {\it long horizontal strips} $L$ of dimensions $1\times K^{-1/4},$
we divide $\Sigma $ into $K^{1/2}$ disjoint {\it long vertical strips} $L$ of dimensions
$K^{-1/2}\times 1$ and, finally, we divide $\Sigma $ into $K^{3/4}$ disjoint {\it short vertical strips} $L$ of dimensions
$K^{-1/2}\times K^{-1/4},$   by looking at all intersections of a long horizontal with a long  vertical strip. For a strip $L,$ we denote by  $f_L:=f\chi_L.$
\smallskip

Let $\alpha\in(0,1).$
Given the function $f,$ $\gamma\in [-1,1]$  and $K,$ we say that the point $\xi\in\R^3$ is {\it $\alpha$-broad for}
$\E f$ if
$$
 \max_L |\E f_L(\xi)|\le\alpha|\E f(\xi)|,
$$
where the $\max_L$ is taken over all
 \begin{itemize}
\item[a)] horizontal strips as above if $|\gamma| K^{1/2}\ge 1,$ or
\item[b)]  horizontal and vertical strips as above if $|\gamma| K^{1/2}< 1.$
 \end{itemize}
We define $Br_\alpha\E f(\xi)$ to be $|\E f(\xi)|$ if $\xi$ is $\alpha$-broad, and zero
otherwise.}
\end{defns}

\noindent{\bf Note:}
In contrast to \cite{Gu16}, we shall here consider the functions $f$ to be defined on the square $\Sigma ,$ which will have slight technical advantages, whereas Guth views them as functions on the surface $S.$ Of course, we can as well identify our functions $f$  with the corresponding  functions $(x,y,\phi_\gamma(x,y))\mapsto f(x,y)$ on ${\bf S_\gamma}.$ Accordingly, one can identify  our ``caps''  $\tau$ and strips $L$ with the corresponding subsets of the surface ${\bf S_\gamma}$ that are the graphs of $\phi_\gamma$  over these sets. This explain why we still like to call the sets $\tau$ ``caps'' .

\smallskip
We will prove the following analogue to Theorem 2.4. in \cite{Gu16}:

\begin{thmnr}\label{broadtheorem}
For any $0<\epsilon< 10^{-10},$  there are constants $K=K(\epsilon)\gg 1$ and $C_\epsilon$ such that for
any radius $R\ge1$ and for any $|\gamma|\le1$ 	
\begin{align*}
		\|Br_{K^{-\epsilon}}
\E f\|_{L^{3.25}(B_R)} \leq C_\epsilon R^\epsilon
\|f\|_{L^2(\Sigma )}^{12/13}\, \|f\|_{L^\infty(\Sigma )}^{1/13}
\end{align*}
for all $f\in L^\infty(\Sigma ).$ Moreover $K(\epsilon)\rightarrow\infty$ as
$\epsilon\rightarrow0.$
\end{thmnr}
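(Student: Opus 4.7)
The plan is to follow the polynomial partitioning argument of Guth \cite{Gu16}, adapted to the negatively curved surface ${\bf S_\gamma}$ and to the strip-based notion of broadness from Definitions \ref{strip-broad}. I would induct on the radius $R$; the base case (bounded $R$) is trivial, as $|\E f|\lesssim \|f\|_{L^\infty}$ pointwise on $B_R$. For the inductive step I first perform a wave packet decomposition $f=\sum_T f_T$ at frequency-scale $R^{-1/2}$, so that each $\E f_T$ is essentially concentrated in a tube $T$ of dimensions $R^{1/2}\times R^{1/2}\times R$ oriented along the normal to ${\bf S_\gamma}$ over the corresponding $R^{-1/2}$-cap in $\Sigma$. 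I then apply Guth's polynomial partitioning theorem to the positive measure $|Br_{K^{-\epsilon}}\E f|^{3.25}\chi_{B_R}$, producing a polynomial $P$ of degree $D=D(\epsilon)$ whose zero set $Z(P)$ cuts $B_R$ into $\sim D^3$ cells $O_i$ on each of which the integral is comparable.

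\textbf{Cellular case.} Let $W:=N_{R^{1/2+\delta}}(Z(P))$ be a thin neighborhood of $Z(P)$, and set $O_i':=O_i\setminus W$. If most of the broad $L^{3.25}$ mass lies in $\bigcup_i O_i'$, then only wave packets whose tubes meet $O_i'$ contribute to the $i$-th cell, and by B\'ezout each tube meets at most $D+1$ cells. Applying the inductive hypothesis at scale $R^{1-\delta}$ on every cell and summing, the gain from $L^2$-orthogonality of wave packets restricted to a single cell, together with the combinatorial factor coming from the $D^3$ cells versus the tube multiplicity $D$, defeats the induction loss $R^\epsilon$ provided $D$ is chosen sufficiently large depending on $\epsilon$.

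\textbf{Algebraic case.} If instead most of the broad mass sits in $W$, I localise further to balls $B$ of radius $R_1=R^{1-\delta}$ inside $W$, and partition the relevant tubes into those \emph{transverse} to $Z(P)\cap B$ and those \emph{tangential}. The transverse tubes enter $B\cap W$ with controlled multiplicity, and an elementary counting argument using the inductive hypothesis closes this subcase. The tangential tubes are confined to a thin neighborhood of the two-dimensional variety $Z(P)\cap B$; here the broadness hypothesis is decisive. At a $K^{-\epsilon}$-broad point $\xi$ no single strip $L$ dominates $\E f(\xi)$, so one may bound $|\E f(\xi)|^2$ by a product of contributions from two distinct strips. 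This reduces matters to a bilinear estimate for the extension operator on separated strips of ${\bf S_\gamma}$, which is exactly what is provided by the hyperbolic bilinear results of \cite{bmv18}; combined with the restriction to a two-dimensional variety (playing the role of a refined decoupling step in the transverse direction to $Z(P)$), this yields the desired $L^{3.25}$ bound with the exponents $12/13$ on $\|f\|_{L^2}$ and $1/13$ on $\|f\|_{L^\infty}$.

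\textbf{Main obstacle.} The hardest point is the tangential subcase. In the elliptic setting of \cite{Gu16}, bilinear transversality is automatic between well-separated caps, whereas on ${\bf S_\gamma}$ the two null directions of the saddle produce pairs of caps whose associated tubes are \emph{not} transverse. It is precisely the long horizontal (and, when $|\gamma|K^{1/2}<1$, vertical) strip decomposition built into Definitions \ref{strip-broad} that isolates the correct pairs of transverse pieces: the cubic perturbation $\frac{\gamma}{3}y^3$ modifies the local geometry at scale $K^{-1/2}$ exactly at the threshold $|\gamma|K^{1/2}\sim 1$, which is why the broadness definition bifurcates there. Carrying this bifurcation uniformly through the wave packet analysis and the tangential reduction, and then feeding it into the bilinear estimates of \cite{bmv18} in such a way that all constants remain independent of $\gamma\in[-1,1]$, will be the delicate technical heart of the proof.
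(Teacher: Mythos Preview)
Your outline captures the polynomial partitioning architecture, but the induction scheme as stated will not close, and one key geometric ingredient is missing.

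\textbf{The induction structure.} You propose a single induction on $R$ and, in the cellular case, to ``apply the inductive hypothesis at scale $R^{1-\delta}$ on every cell''. But the cells $O_i'$ are \emph{not} contained in balls of radius $R^{1-\delta}$; they are pieces of $B_R$ at the original scale. The gain in the cellular case is not a smaller radius but a smaller $L^2$ mass: each tube meets $\lesssim D$ cells, so for a typical cell $i$ one has $\sum_\tau\|f_{\tau,i}\|_2^2\lesssim D^{-2}\sum_\tau\|f_\tau\|_2^2$. To exploit this one must induct on the size of $\sum_\tau\int|f_\tau|^2$ \emph{at fixed $R$}, which in turn forces a stronger inductive statement than Theorem~\ref{broadtheorem} itself. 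The paper formulates this as Theorem~\ref{largetheorem}, carrying three extra parameters: a cap multiplicity $\mu$ (since $\supp f_{\tau,i}$ is a slight enlargement of $\tau$), a variable broadness threshold $\alpha\ge K^{-\epsilon}$ (since broadness degrades, $\alpha\to 2\alpha$ in the cellular step and $\alpha\to 60\alpha$ in the transverse step), and a compensating factor $R^{\delta_{\mathrm{trans}}\log(2K^\epsilon\alpha\mu)}$ on the right-hand side to absorb these losses. Without this richer hypothesis the cellular branch does not close.

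\textbf{From strip-broadness to bilinear transversality.} In the tangential subcase you write that broadness lets one bound $|\E f(\xi)|^2$ by a product over two distinct strips. This is not quite the mechanism: the bilinear input required (Lemma~\ref{lemma4.11}) needs two \emph{strongly separated caps} $\tau_1,\tau_2$, meaning both $|y^c_2-y^c_1|$ and the twisted transversality quantity $\max\{|{\bf t}^\gamma_{z_1^c}|,|{\bf t}^\gamma_{z_2^c}|\}$ are $\gtrsim \mu^{1/2}K^{-1}$. The bridge between ``no single ragged strip dominates'' and ``there exist two strongly separated caps among the dominant ones'' is a nontrivial combinatorial statement --- the Geometric Lemma~\ref{geometric} --- which shows that any family of caps containing no strongly separated pair is covered by $O(1)$ of the fixed ragged strips. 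The case split $|\gamma|K^{1/2}\gtrless 1$ in the broadness definition is designed precisely so that this lemma goes through; you have correctly identified the threshold, but the lemma itself is the missing step. The bilinear estimate used is then proved directly in the paper (Lemma~\ref{lemma4.11}) rather than quoted from \cite{bmv18}, since one needs the specific $L^2$ form with the $R^{-1/2}$ gain on a single $R^{1/2}$-cube $Q$.
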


Note that Theorem \ref{mainresult} follows from this theorem by
arguments that are similar to those in \cite{Gu16}. To show this, let us put  $p:=3.25.$
\smallskip

We  divide the domain of integration $B_R$ in \eqref{mainresult}   into   four subsets:
\begin{eqnarray*}
A&:=&\{\xi\in B_R:\xi\text{ is $K^{-\epsilon}$-broad for $\E f$}\},\\
B&:=&\{\xi\in B_R:|\E f_{L}(\xi)|> K^{-\epsilon}|\E f(\xi)|\text{ for some long horizontal strip }L\}, \\
C&:=&\{\xi\in B_R\setminus B:|\E f_{L}(\xi)|> K^{-\epsilon}|\E f(\xi)|\text{ for some long vertical strip }L\}
\\
D&:=&\{\xi\in B_R\setminus(B\cup C):|\E f_{L}(\xi)|> K^{-\epsilon}|\E f(\xi)|\text{ for some short vertical strip }L\}.
\end{eqnarray*}
By the definition of broad points, $B_R=A\cup B\cup C\cup D$. Notice also that if $|\gamma| K^{1/2}\ge 1$, then $C=D=\emptyset$ by construction.
\smallskip

If $\xi\in A$, then $|\E f(\xi)|=Br_{K^{-\epsilon}}\E f(\xi)$, so that the contribution of $A$ can be controlled using Theorem \ref{broadtheorem}. Notice that
$$\|f\|_{L^2(\Sigma )}^{12/13}\, \|f\|_{L^\infty(\Sigma )}^{1/13}\leq\|f\|_{L^2(\Sigma )}^{2/q}\, \|f\|_{L^\infty(\Sigma )}^{1-2/q},$$
since $q>2.6>13/6$.

For the other parts, we induct on the size of $R$.

\smallskip
For $\xi\in B$ we estimate
\begin{equation}\label{egamest}
|\E f(\xi)|< K^\epsilon \sup_L|\E f_{L}(\xi)| \leq K^\epsilon \Big(\sum_L|\E f_L(\xi)|^p\Big)^{1/p},
\end{equation}
where here the supremum and sum are taken over all long horizontal strips $L.$

 If $L=[0,1]\times [b, b+K^{-1/4}] $ is any of these long horizontal strips,
we scale and translate $y=b+K^{-1/4}y'.$ Then
\begin{align*}
	K^{1/4}\phi_\gamma(x,y)
	= (x+\gamma K^{-1/4}by')y'+\frac\gamma{3K^{1/2}}y'^3+K^{1/4} bx +b^2 \gamma y'+\text{ constant}.
\end{align*}
By applying the linear change of coordinates  $x'=x+\gamma K^{-1/4}by',$ we obtain
\begin{align*}
	K^{1/4}\phi_\gamma(x,y)=\phi_{\gamma/K^{1/2}}(x',y')+K^{1/4} bx'+\text{ constant}.
\end{align*}
Then
\begin{equation}\label{egamest2}
	|\E f_{L}(\xi)|= K^{-1/4}\big|\mathcal{E}_{\gamma/K^{1/2}}f^L\big(\xi_1+b\xi_3,(\xi_2-b\gamma \xi_1) K^{-1/4},\xi_3 K^{-1/4}\big)\big|,
\end{equation}
where we have defined $f^L$ by  $f^L(x',y'):=f_L(x,y)$, so that $\|f^L\|_2= K^{1/8}\|f_L\|_2$ and $\|f^L\|_\infty\leq\|f\|_\infty$. Note that we have $y'\in[0,1]$ and $x'\in[-1,2],$ since $|\gamma bK^{-1/4}|\leq1,$ and  that the function $\mathcal{E}_{\gamma/K^{1/2}}f^L$ is supported in a box  of dimensions $2R\times\frac{2R}{K^{1/4}}\times\frac{2R}{K^{1/4}}.$ What is crucial here  is  that, compared to $B_R,$ this box is shorter by the factor $2 K^{-1/4}\le  1/2$ in  the $\xi_3$-direction,  for $K$ sufficiently large.

A problem more of technical nature is that in $\xi_1$-direction it is still of  the same size as  $R.$ However, as we shall show in  Lemma \ref{aux}, we can automatically  pass from an estimate on a box $B_{R'}$ to a corresponding  estimate on the whole ``plate''   $P_{R'}:=\R^2\times [0,R']$ containing   $B_{R'}.$  Applying this in  the present situation, with
$R':=2 K^{-1/4}R\le R/2,$ by our induction hypothesis we may then assume that the following estimate holds true:
\begin{align*}
	\|\mathcal{E}_{\gamma/K^{1/2}}f^L\|_{L^{3.25}(P_{R'})} \leq C_\epsilon R'^\epsilon \|f^L\|_{L^2(\Sigma )}^{2/q}\,\|f^L\|_{L^\infty(\Sigma)}^{1-2/q}.
\end{align*}

Thus, by \eqref{egamest} and \eqref{egamest2}, we see that
\begin{align*}
	\|\E f\|_{L^p(B)}
	\leq&	K^{1/2p-1/4+\epsilon} \Big(\sum_L\|\mathcal{E}_{\gamma/K^{1/2}} f^L\|^p_{L^p\big(P_{2R/K^{1/4}}\big)}\Big)^{1/p}\\
	\leq& CC_{\epsilon} R^\epsilon K^{1/2p-1/4q'+3\epsilon/4} \|f\|_{2}^{2/q}\,\|f\|_{\infty}^{1-2/q}\\
	\leq& \frac 1{10}C_{\epsilon} R^\epsilon \|f\|_{2}^{2/q}\,\|f\|_{\infty}^{1-2/q},
\end{align*}
since $p> 2q'$.

\secret{

If $\tau=(a,b)+[0,1/K]^2$, we scale and translate $(x,y)=(a,b)+(x',y')/K$, so that
\begin{align*}
	K^2\phi_\gamma(x,y)
	=& x'(y'+bK)+aKy'+\frac\gamma{3K}y'^3+\gamma by'(y'+bK)+\text{constant}\\
	= &(x'+\gamma by')(y'+bK)+\frac\gamma{3K}y'^3+aKy'+\text{constant}.
\end{align*}
Hence we change coordinates once more to $x''=x'+\gamma by'$, $y''=y'$, and obtain
\begin{align*}
	K^2\phi_\gamma(x,y)=\phi_{\gamma/K}(x'',y'')+bKx''+aKy''+\text{constant}.
\end{align*}
Then
\begin{align*}
	|\E f_{\tau}(\xi)|= K^{-2}|\mathcal{E}_{\gamma/K}f^\tau((\xi_1+b\xi_3)/K,(\xi_2- b\gamma\xi_1+a\xi_3)/K,\xi_3/K^2)|,
\end{align*}
 where $f^\tau(x'',y''):=f_\tau(x,y).$ This means that if $\xi\in B_R$, then the argument of $\mathcal{E}_{\gamma/K}f^\tau$ lies in a ball of radius $3R/K$.  Note also that  $\|f^\tau\|_2= K\|f_\tau\|_2$ and $\|f^\tau\|_\infty= \|f_\tau\|_\infty\leq\|f\|_\infty$.
 In order to apply induction, note that $y''=y'\in[0,1]$, whereas we only have $x''\in[-1,2]$ since $|\gamma b|\leq1.$  This  creates no problem as we may split the domain of integration into three parts; we omit the details.
Since $3R/K\le R/2,$ by our induction hypothesis on the size of $R$  we find  that

{\color{blue} Then, in view of of \eqref{egamest},} {\color{magenta}  (We re-installed $\epsilon,$ in view of \eqref{egamest})}
\begin{align*}
	\|\E f\|_{L^p(B)}
	\leq&	K^{4/p-2+ \epsilon} \Big(\sum_\tau\|\mathcal{E}_{\gamma/K} f^\tau\|^p_{L^p(B_{3R/K})}\Big)^{1/p}\\
	\leq& C\,C_{\epsilon} (R/K)^\epsilon K^{4/p-2+ \epsilon}
			\left(\sum_\tau\|f^\tau\|_{2}^{2p/q}\, \|f^\tau\|_{\infty}^{(1-2/q)p}\right)^{1/p}\\
	\leq& C\,C_{\epsilon} R^\epsilon K^{4/p-2+2/q}
			\left(\sum_\tau\|f_\tau\|_{2}^{2p/q}\, \right)^{1/p}\|f\|_{\infty}^{1-2/q}\\
	\leq& C\,C_{\epsilon} R^\epsilon K^{4/p-2/q'}
			\left(\sum_\tau\|f_\tau\|_{2}^{2}\, \right)^{1/q}\|f\|_{\infty}^{1-2/q}\\
	\leq& \frac 1{10}C_{\epsilon} R^\epsilon \|f\|_{2}^{2/q}\|f\|_{\infty}^{1-2/q},
\end{align*}
since $p\geq q$, $p> 2q'$ and since we can choose $K$ large enough.
}

\smallskip
For $\xi\in C$, i.e., in the case of  long vertical strips, we need to be a bit more careful.
The natural change of coordinates  is now $x=a+K^{-1/2}x'$,  if the long vertical strip $L$ is given by $L=[a, a+K^{-1/2}]\times [0,1].$  Then
\begin{align*}
	K^{1/2}\phi_\gamma(x,y)
	=& x'y+K^{1/2}\frac\gamma{3}y^3 +aK^{1/2}y= \phi_{\gamma K^{1/2}}(x',y)+aK^{1/2}y,
\end{align*}
so to fit into our scheme, we need that $|\gamma K^{1/2}|\leq 1$. This is the
reason why we consider this type of strips only when $|\gamma K^{1/2}|\le1.$
Then we find that
\begin{align*}
	|\E f_{L}(\xi)|= K^{-1/2}\big|\mathcal{E}_{\gamma K^{1/2}}f^L\big(\xi_1K^{-1/2},\xi_2+a\xi_3,\xi_3 K^{-1/2}\big)\big|,
\end{align*}
where $f^L$ is now defined by  $f^L(x',y):=f_L(x,y),$
and can argue in a similar way as in the preceding case.

\smallskip

As for $D,$  if $L=[a,a+K^{-1/2}]\times [b, b+K^{-1/4}] $ is any of the short vertical strips, then
we scale and translate $x=a+K^{-1/2}x',$ $y=b+K^{-1/4}y'.$ Then
\begin{align*}
	K^{3/4}\phi_\gamma(x,y)
	= (x'+\gamma K^{1/4}by')(y'+K^{1/4}b)+\frac\gamma{3}y'^3+K^{1/2} ay' +\text{ constant}.
\end{align*}
By applying the linear change of coordinates  $x''=x'+\gamma K^{1/4}by',$ $y''=y'$ (note that, since $|\gamma|K^{1/2}\le1,$ we have that $|\gamma|bK^{1/4}\le1$), we obtain
\begin{align*}
	K^{3/4}\phi_\gamma(x,y)=\phi_{\gamma}(x'',y'')+K^{1/4} bx''+K^{1/2} ay''+\text{ constant}.
\end{align*}
Then, if $\xi\in E,$
\begin{align*}
	|\E f_{L}(\xi)|= K^{-3/4}\big|\mathcal{E}_{\gamma}f^L\big(K^{-1/2}(\xi_1+b\xi_3),K^{-1/4}(\xi_2+a\xi_3-b\gamma \xi_1) , K^{-3/4}\xi_3\big)\big|,
\end{align*}
where we have defined $f^L$ by  $f^L(x'',y''):=f_L(x,y).$ From here on , we argue in a similar way as before.
\qed

\section{Reduction of Theorem \ref{broadtheorem} to a setup  allowing for inductive arguments}\label{reduction}

Following Section 3 in \cite{Gu16}, we shall next devise a setup and formulate a more general statement in Theorem \ref{largetheorem} which will become amenable to inductive arguments.
As in that paper, we change and extend our previous  notation slightly. We introduce a ``multiplicity''  $\mu\ge1,$  and choose accordingly   {\it caps }$\tau$ which now are allowed to be squares  of possibly larger side length   $r_\tau\in [K^{-1},\mu^{1/2}K^{-1}]$ than before. It can then happen that such a cap $\tau$  is no longer  contained in $\Sigma;$  in that case, we  truncate it by replacing it with its  intersection with $\Sigma.$

 We assume that we are given a family of such caps  $\tau$ covering $\Sigma =[0,1]\times[0,1]$ such that their centers are $K^{-1}$-
separated. Hence,  at any point there will be at most  $\mu$  of these caps which overlap at that point. Notice also that there are at most $K^2$
 caps $\tau$  in the family. We also assume that we have a decomposition
\begin{equation}\label{fsumtau}
 f=\sum_\tau f_\tau,
\end{equation}
 where
$\supp f_\tau\subset\tau.$

\smallskip

Given the family of caps, we define recursively a fixed family of {\it ragged long horizontal strips}  $S_\ell, \,(\ell=1,2,\dots,[\mu^{-1/2}K^{1/4}]),$ of
``widths" $\sim \mu^{1/2} K^{-1/4},$ in the following way:

\begin{eqnarray*}
 \F_1&:=&\{\tau: \tau^0\cap([0,1]\times[0,\mu^{1/2}K^{-1/4}])\ne\emptyset\}\qquad\text{and}\quad S_1:=\bigcup_{\tau\in\mathcal F_1}\tau,\\
\F_2&:=&\{\tau\notin\mathcal F_1: \tau^0\cap([0,1]\times
[\mu^{1/2}K^{-1/4},2\mu^{1/2}K^{-1/4}])\ne\emptyset\} \quad\text{and} \quad S_2:=\bigcup_{\tau\in\mathcal F_2}\tau,\\
&\vdots &\\
\F_\ell&:=&\{\tau \notin\mathcal\cup_{j=1}^{\ell-1}\mathcal F_j: \tau^0\cap([0,1]\times
[(\ell-1)\mu^{1/2}K^{-1/4},\ell\mu^{1/2} K^{-1/4}])\ne\emptyset\}\quad\text{and} \quad S_\ell:=\bigcup_{\tau\in\mathcal F_\ell}\tau,\\
&\vdots &
\end{eqnarray*}
Here, $\tau^0$ denotes the open interior of $\tau.$
Note that the families $\mathcal F_\ell$ are pairwise disjoint. Define $f_{S_\ell}:=\sum_{\tau\in\mathcal F_\ell} f_\tau,$ so that
$f=\sum_{\ell}f_{S_\ell}.$

\medskip

When $|\gamma| K^{1/2}\le1,$ we also define a family of  pairwise in measure disjoint {\it ragged long vertical strips} of  ``widths" $\sim \mu^{1/2} K^{-1/2}$   in an  analogous way, and a family of  pairwise in measure disjoint {\it ragged short  vertical strips} of  dimensions $\sim \mu^{1/2} K^{-1/2}\times \mu^{1/2}K^{-1/4}$ given by all intersections of a long horizontal and a long vertical strip,   and  add them to our set of ragged strips by denoting them by $S_\ell, \, \ell=[\mu^{-1/2} K^{1/4}]+1,\dots,$  and put as before $f_{S_\ell}:=\sum_{\tau\in\mathcal F_\ell} f_\tau.$

\medskip

Given a family of caps $\tau$  as above, and given the corresponding ragged strips $S_\ell$ and functions $f_\tau$ and
$f_{S_\ell}$ as before, we say that a point $\xi\in\R^3$ is $\alpha${\it -broad
for $\E f$ and the given family of caps},  if
$$
 \max_{S_\ell} |\E f_{S_\ell}(\xi)|\le\alpha|\E f(\xi)|,
$$
where the maximum is taken over the set of  all  ragged strips  $S_\ell$ as defined above (recall that this set  depends on the size of $|\gamma|K^{1/2}$).

We also define $Br_\alpha\E f(\xi):= |\E f(\xi)|$ if $\xi$ is $\alpha$-broad, and zero otherwise.
\smallskip

\begin{remark}\label{onbroad}
Note that when $\mu=1,$ then ragged strips are indeed strips in the sense of Definitions \ref{strip-broad}, and our present definition of broadness of points coincides in this case  with the one given before.
\end{remark}

The key result will be the following analogue to  Theorem 3.1 in \cite{Gu16}:

\begin{thmnr}\label{largetheorem}
For any $0<\epsilon< 10^{-10},$ there are constants $K=K(\epsilon)$ and $C_\epsilon,$ independent of
$\gamma\in [-1,1],$  such that for any family of caps $\tau$  with multiplicity at most $\mu$ covering $\Sigma$ as above and the associated family of
ragged strips $S_\ell$  and associated  functions $f_\tau$ and $f_{S_\ell}$   as defined  above which decompose $f,$  for
any length $R\ge1,$ any $\alpha\ge K^{-\epsilon}$ and for any $\gamma\in [-1,1],$ the following
holds true:

If for every $\omega\in \Sigma  ,$ and every cap $\tau$  as above,
\begin{equation}\label{average}
\oint_{B(\omega,R^{-1/2})}|f_\tau|^2\le 1,
\end{equation}

then,
\begin{equation}\label{broadest}
\int_{B_R}(Br_{\alpha} \E f)^{3.25} \leq C_\epsilon R^\epsilon
\bigg(\sum_\tau\int|f_\tau|^2\bigg)^{3/2+\epsilon}R^{\delta_{trans}\log(2K^\epsilon\alpha\mu)},
\end{equation}
where $\delta_{trans}:=\epsilon^6.$ Moreover
$K(\epsilon)\rightarrow\infty$ as $\epsilon\rightarrow0.$
\end{thmnr}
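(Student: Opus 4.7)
\medskip
\noindent\textbf{Proof proposal.} The plan is to follow the polynomial partitioning scheme of \cite{Gu16}, adapted to the hyperbolic geometry of $\mathbf{S}_\gamma$ and to the three families of ragged strips introduced above. I would argue by induction on $R$ at scales that double: assume the estimate \eqref{broadest} holds for all radii $R' \le R/2$ (and all admissible $\mu$, $\alpha$, $\gamma$, and families of caps satisfying \eqref{average}), and prove it at scale $R$. The base case $R \le R_0(\epsilon)$ is handled by the trivial $L^\infty$ bound $|\mathcal{E}_\gamma f(\xi)| \lesssim \sum_\tau |\tau|\, \|f_\tau\|_\infty$ combined with \eqref{average}; the large constant $C_\epsilon$ absorbs this. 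For the inductive step, apply the Guth--Katz polynomial partitioning theorem to the nonnegative function $(Br_\alpha \mathcal{E}_\gamma f)^{3.25} \chi_{B_R}$ at a degree $D \sim R^{\epsilon^3}$, obtaining a polynomial $P$ of degree $\le D$ whose zero set $Z(P)$ splits $B_R \setminus Z(P)$ into $\sim D^3$ open cells $O_i$ on each of which the integral $\int_{O_i}(Br_\alpha\mathcal{E}_\gamma f)^{3.25}$ is comparable.

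\medskip
\noindent Next, write $W := N_{R^{1/2+\delta}}(Z(P)) \cap B_R$ for a small $\delta$, and split into the \emph{cellular} and \emph{algebraic} alternatives depending on whether a large fraction of the $L^{3.25}$ mass lives in $\bigsqcup_i (O_i \setminus W)$ or in $W$. In the cellular case, after a wave packet decomposition of $f$ adapted to scale $R^{-1/2}$ on $\Sigma$, a wave packet can enter only $\lesssim D$ cells (B\'ezout), hence for each cell $O_i$ only a small fraction of the packets contribute. Restricting $f$ to the relevant packets one obtains functions $f^{(i)}$ satisfying the averaging hypothesis \eqref{average} with the same family of caps, with $\sum_i \|f^{(i)}_\tau\|_2^2 \lesssim \|f_\tau\|_2^2$ for each $\tau$. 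Applying the inductive hypothesis at scale $R$ (or at the reduced scale once we fit each $O_i$ into a ball) in each cell and summing over $i$ using H\"older's inequality and $D \sim R^{\epsilon^3}$ yields a gain of a power of $D$ that beats $R^\epsilon$, closing the cellular case.

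\medskip
\noindent The algebraic alternative reduces the contribution to $W$, which is further decomposed into $R^{1/2}$-balls $B_j$ along $Z(P)$. For each $B_j$, classify the caps $\tau$ as \emph{transverse} or \emph{tangent} to $Z(P) \cap B_j$ according to whether the (three-dimensional) surface normal direction above $\tau$ makes angle $\gtrsim R^{-1/2+\delta}$ with the tangent plane of $Z(P)$ at nearby points, or not; this is where the distinction between long horizontal, long vertical and short vertical ragged strips enters, since on $\mathbf{S}_\gamma$ the normal directions cluster precisely along the rulings that those strips track (and the dichotomy $|\gamma|K^{1/2} \lessgtr 1$ dictates which rulings survive). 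In the \emph{transverse} subcase, the transverse caps in a single $B_j$ lie in at most $\mathrm{poly}(D)$ strips, so by the broadness hypothesis one can factor out $\alpha$ and apply Cauchy--Schwarz over strips, reducing to an estimate on each strip at scale $R/2$ (via Lemma \ref{aux} to pass from balls to plates); induction then closes this piece with a small loss logarithmic in $D\alpha$. In the \emph{tangent} subcase, the tangent caps overlap $B_j$ with multiplicity increased by a factor $M \sim \mathrm{poly}(D)$, so one reapplies the inductive hypothesis at the same scale $R$ but with multiplicity replaced by $\mu M$ and $\alpha$ by $2\alpha$; the extra exponent $\delta_{\mathrm{trans}}\log(2K^\epsilon\alpha\mu)$ on the right-hand side is calibrated so that raising $\mu$ to $\mu M$ exactly pays for this reapplication.

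\medskip
\noindent The main obstacle will be the tangent subcase, since it does \emph{not} reduce the radius. Closing the induction there requires the bookkeeping of the double induction on $(R,\mu)$ to be balanced so that the increase in $\mu$ is more than compensated by the logarithmic factor $R^{\delta_{\mathrm{trans}}\log(\mu M/\mu)}$, which forces the precise choices $D \sim R^{\epsilon^3}$ and $\delta_{\mathrm{trans}} = \epsilon^6$. A secondary technical difficulty, specific to the perturbed hyperbolic setting and absent from \cite{Gu16}, is proving the requisite transversality estimate between the normal cones above caps and the tangent planes of $Z(P)$: because of the saddle geometry, normals above different caps can align, and one must use the bilinear/strip structure of the broadness definition (together with the cubic perturbation $\tfrac{\gamma}{3}y^3$ controlled uniformly in $\gamma$) to rule out simultaneous tangency of more than a controlled number of caps; this is where our previous bilinear work \cite{bmv18, bmv19} enters as the substitute for Guth's elliptic bilinear input.
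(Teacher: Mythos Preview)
Your outline has the right high-level shape (polynomial partitioning, cellular/algebraic dichotomy, tangent/transverse split), but the roles of the tangent and transverse subcases are essentially reversed, and the mechanism you propose for the tangent case would not close the induction.

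In the paper (following \cite{Gu16} in dimension three), the \emph{tangential} contribution is \emph{not} handled by re-inducting with increased multiplicity. It is controlled directly by a bilinear $L^4$ estimate (Proposition~\ref{prop3.9} and Lemma~\ref{lemma4.11}): on each $B_j$ (which has radius $R^{1-\delta}$, not $R^{1/2}$), tangential tubes point in at most $R^{1/2+O(\delta)}$ directions (Lemma~\ref{lemma3.6}), and for any two \emph{strongly separated} caps $\tau_1,\tau_2$ one proves a bilinear bound $\int_Q |\E f_{\tau_1,j,tang}|^2|\E f_{\tau_2,j,tang}|^2\lesssim R^{-1/2+O(\delta)}(\cdots)$ via an explicit computation of $\Gamma^\gamma_z(z_1,z_2,z_1',z_2')$ along the intersection curve. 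This is the place where the hyperbolic geometry and the cubic perturbation genuinely enter, and it is a terminal estimate, not an inductive step. Your proposal to raise $\mu$ to $\mu M$ with $M\sim\mathrm{Poly}(D)$ and re-apply the theorem at the \emph{same} radius cannot work: the factor $R^{\delta_{trans}\log M}=R^{O(\delta_{trans}\delta_{deg}\log R)}$ is a loss with nothing to compensate it, and there is no separate gain in the tangential case to play against it.

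Correspondingly, the strip structure and broadness are not used in the transverse subcase in the way you describe. They enter through Lemma~\ref{lemma3.8} and the Geometric Lemma~\ref{geometric}: if no two caps in $I^c$ are strongly separated, then all of $I^c$ lies in $O(1)$ ragged strips, and broadness kills that contribution; otherwise one lands in the bilinear term. What remains is a sum $\sum_I Br_{60\alpha}\E f_{I,j,trans}$, and the transverse subcase is then closed by straightforward induction on the radius (since $B_j$ has radius $R^{1-\delta}$), using only that each tube lies in $\mathrm{Poly}(D)$ sets $\T_{j,trans}$ (Lemma~\ref{lemma3.5}) and that $\mu\mapsto 2\mu$, $\alpha\mapsto 60\alpha$; the factor $R^{\delta_{trans}\log(2K^\epsilon\alpha\mu)}$ absorbs precisely this bounded increase, not a $\mathrm{Poly}(D)$ one. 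Finally, in the cellular case the paper inducts on the size of $\sum_\tau\int|f_\tau|^2$ (which drops by $D^{-2}$), not on $R$; the cells $O_i'$ need not sit inside smaller balls.
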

Here, in $\R^n,$ by $B(\omega,r)$ we denote the Euclidean ball of radius $r>0$ and center $\omega,$ and by $\oint_A f:=\frac 1{|A|}\int_A f$ we
denote the mean value $f$ over the measurable set $A$ of volume $|A|>0.$
\medskip

We can easily recover Theorem \ref{broadtheorem}  by applying Theorem \ref{largetheorem} with $\mu=
1,$ $\epsilon< 10^{-10}$ and $\alpha= K^{-\epsilon},$ in the same way as Guth shows how Theorem 2.4 follows from Theorem 3.1 in \cite{Gu16}. Keep here Remark \ref{onbroad} in mind, and note that for these choices of $\mu,\epsilon$ and $\alpha,$ we have  $\delta_{trans}\log (2K^{\epsilon}\alpha\mu)\le
10\delta_{trans}\le\epsilon.$

\section{Proof of Theorem  \ref{largetheorem}}
Recall that we  had put $\delta_{trans}:=\epsilon^6,$  so that, if we define $\delta_{deg}:=\epsilon^4$ and $\delta:=\epsilon^2,$ then
$$
\delta_{trans}\ll\delta_{deg}\ll \delta\ll \epsilon< 10^{-10}.
$$
We also set, for given $R\ge 1,$
$$
K=K(\epsilon):=e^{\epsilon^{-10}}\qquad\text{ and }\qquad D=D(\epsilon):=R^{\delta_{deg}}=R^{\epsilon^4}.
$$

\begin{remarks}\label{rem4.1}
a) It is enough to consider the case where $\alpha\mu\le 10^{-5},$ because in the
other case, the exponent $\delta_{trans}\log(K^\epsilon\alpha\mu)$ is very large  and the estimate \eqref{broadest} trivially holds true.
Henceforth, we shall therefore always assume that $\alpha\mu\le 10^{-5}.$

b) It is then also enough to consider the case where $R\ge 1000\, e^{e^{\epsilon^{-12}}}.$
\end{remarks}

\noindent To justify the last claim, notice first that our assumption \eqref{average} implies that $\|f_\tau\|_2\leq 1$. Since there are at most $K(\epsilon)^2$ caps $\tau$, we have $\sum_\tau \|f_\tau\|_2\leq K(\epsilon)^2$. Therefore, we trivially even obtain that when $R\le 1000\, e^{e^{\epsilon^{-12}}},$ then
\begin{eqnarray*}
\int_{B_R}|\E f|^{3.25}&\le& R^3\|f\|_1^{3.25}\le R^3 (\sum\limits_\tau\|f_\tau\|_1)^{3.25}\le  R^3(\sum\limits_\tau\|f_\tau\|_2)^{3.25}\\
&\le& R^3 K(\epsilon)^{2(1/4-2\epsilon)} (\sum\limits_\tau\|f_\tau\|_2)^{3+2\epsilon}
\le R^3 K(\epsilon)^{1/2-4\epsilon}K(\epsilon)^{2(3/2+\epsilon)} (\sum\limits_\tau\|f_\tau\|^2_2)^{3/2+\epsilon}\\
 &\le& C_1(\epsilon) (\sum\limits_\tau\|f_\tau\|^2_2)^{3/2+\epsilon},
\end{eqnarray*}
with $C_1(\epsilon):=(1000\, e^{e^{\epsilon^{-12}}})^3 K(\epsilon)^{7/2-2 \epsilon},$
hence \eqref{broadest}.

\medskip

As usual, we will work with wave packet decompositions of the functions $f$ defined on
${\bf S_\gamma}.$ Following
\cite{Gu16}, we decompose $\Sigma $ into  squares (``caps'') $\theta$ of side length  $R^{-1/2}.$  By
$\omega_\theta$ we shall denote the center of $\theta,$ and by  $\nu(\theta)$  the ``outer'' unit normal  to
${\bf S_\gamma}$ at the point $(\omega_\theta,\phi_\gamma(\omega_\theta))\in {\bf S_\gamma},$ which points into the direction of $(-\nabla \phi_\gamma(\omega_\theta),-1).$
 $\T(\theta)$ will denote a set  of  $R^{1/2}$-separated tubes $T$ of radius $R^{1/2+\delta}$ and length $R,$ which are  all parallel to  $\nu(\theta)$ and for which the corresponding thinner tubes of radius $R^{1/2}$ with the same axes
cover  $B_R.$ We will  write $\nu(T):=\nu(\theta)$ when $T\in\T(\theta).$

Note that  for each $\theta,$ every  point $\xi\in B_R$ lies in
$O(R^{2\delta})$ tubes $T\in\T(\theta).$ We put  $\T:=\bigcup\limits_{\theta} \T(\theta).$ Arguing in the same way as in \cite{Gu16}, Proposition 2.6, we arrive at the following  approximate wave packet decomposition:

\begin{pronr}\label{packets}  Assume that $R$ is sufficiently
large (depending on $\delta$). Then, for any $\gamma\in[-1,1],$ given  $f\in
L^2(\Sigma ),$  we may associate to  each tube $T\in\T$  a function $f_T$ such that the following hold true:
\begin{itemize}
\item[ a)] If $T\in\T(\theta),$ then $\supp f_T\subset 3\theta.$
\item[ b)] If $\xi\in B_R\setminus T,$ then $|\E f_T(\xi)|\le R^{-1000}\|f\|_2.$
\item[c)] For any $x\in B_R,$ we have $|\E f(x)-\sum_{T\in\T}\E f_T(x)|\le
    R^{-1000}\|f\|_2.$
\item[d)] (Essential orthogonality) If $T_1,T_2\in \T(\theta)$ are disjoint, then \hfill \newline $\big|\int f_{T_1} \overline{f_{T_2} }\big | \le R^{-1000} \int_{3\theta} |f|^2.$
\item[e)] $\sum_{T\in\T(\theta)}\int_{\Sigma}|f_T|^2\le C\int_{3\theta}|f|^2.$

\end{itemize}
\end{pronr}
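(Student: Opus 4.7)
The plan is to mimic the argument of \cite{Gu16}, Proposition 2.6, essentially verbatim. The construction is geometric and depends only on the smoothness of $\phi_\gamma$ together with uniform bounds on its derivatives on $\Sigma$ for $|\gamma|\le 1$; the signature of the Hessian of $\phi_\gamma$ plays no role. The key quantitative input is that on any cap $\theta$ of side $R^{-1/2}$ centered at $\omega_\theta$, the graph of $\phi_\gamma$ over $\theta$ deviates from its tangent plane at $(\omega_\theta,\phi_\gamma(\omega_\theta))$ by at most $CR^{-1}$, uniformly in $\gamma$.

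First I would fix a smooth partition of unity $\{\eta_\theta\}$ on $\Sigma$ subordinate to the covering $\{3\theta\}_\theta$, so that $\supp\eta_\theta\subset 3\theta$ and $\sum_\theta\eta_\theta\equiv 1$ on $\Sigma$. Writing $\tilde f_\theta:=\eta_\theta f$ then gives an exact decomposition $f=\sum_\theta\tilde f_\theta$, which forces property (a). For each fixed $\theta$, I would further decompose $\tilde f_\theta$ into wave packets indexed by $T\in\T(\theta)$: choose a Schwartz partition of unity $\{\chi_T\}_{T\in\T(\theta)}$ on the two-dimensional plane perpendicular to $\nu(\theta)$, with each $\chi_T$ concentrated in a disk of radius $R^{1/2+\delta}$ centered on the axis of $T$ and with Fourier support essentially contained in the dual ball of radius $R^{-1/2-\delta}$. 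Pulling this partition back to the cap $\theta$ via the linearization of $\phi_\gamma$ at $\omega_\theta$ produces cut-offs on $\theta$ spaced at the dual scale $R^{-1/2}$, and one defines $f_T$ so that $\E f_T(\xi)\approx\chi_T(\xi_\perp)\,\E\tilde f_\theta(\xi)$, where $\xi_\perp$ denotes the projection of $\xi$ onto the plane perpendicular to $\nu(\theta)$.

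Verification of the five properties is then routine. Property (a) is immediate from $\supp\tilde f_\theta\subset 3\theta$. Property (c) follows because $\sum_\theta\eta_\theta\equiv 1$ on $\Sigma$ and $\sum_T\chi_T\equiv 1$ on the tangent plane, with errors only from Schwartz tails, controlled by $R^{-1000}\|f\|_2$. For property (b) one uses non-stationary phase: when $\xi\notin T$, the modulated phase appearing in $\E f_T(\xi)$ has $(x,y)$-gradient of size $\gtrsim R^\delta$ throughout $\theta$, thanks to the $R^{-1}$ tangent-plane accuracy recalled above; repeated integration by parts then produces $R^{-1000}$ decay. Properties (d) and (e) follow from Plancherel applied on the cap $\theta$, exploiting the near-orthogonality of the wave packet modes at separation $R^{1/2}$ in the plane perpendicular to $\nu(\theta)$.

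The main obstacle will be bookkeeping: one must ensure that all implicit constants (rapid-decay rates, orthogonality defects, and the quality of the linearization of $\phi_\gamma$) are uniform in $\gamma\in[-1,1]$, by tracking the $C^k$-norms of $\phi_\gamma$ on $\Sigma$ and checking that the coordinate change linking the $\xi$-side tube lattice to the $(x,y)$-side cap is uniformly well-conditioned. Since the sign of the Gaussian curvature of $\bf S_\gamma$ nowhere enters, no ideas beyond those of \cite{Gu16} appear to be required.
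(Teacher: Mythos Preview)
Your proposal is correct and matches the paper's approach exactly: the paper simply states that one argues ``in the same way as in \cite{Gu16}, Proposition 2.6'' and notes in the subsequent remark that the bounds are uniform in $\gamma$ because $|\gamma|\le 1$. Your outline supplies precisely the details behind that citation, with the same emphasis on uniformity in $\gamma$ and the irrelevance of the signature of the Hessian.
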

\begin{remark}\label{rem4.3}
Note that since $|\gamma|\le1,$ in this Proposition we have bounds
that are uniform in $\gamma.$ Moreover, note that, the same argument as in Remark \ref{rem4.1} b) shows that, in order to prove  Theorem \ref{largetheorem} it is enough to consider the case where $R$ is sufficiently
large (depending on $\delta,$ i.e., depending on $\epsilon$).
\end{remark}

\smallskip
We next recall the version of the polynomial ham sandwich theorem with non-singular polynomials from \cite{Gu16}.
 If $P$ is a real polynomial on $\R^n,$ we denote by $Z(P):=\{\xi\in\R^n: P(\xi)=0\}$ its  null variety. $P$ is said to be {\it non-singular} if $\nabla P(\xi)\ne 0$ for every point $\xi\in Z(P).$

Then, by Corollary 1.7 in \cite{Gu16} there is a non-zero polynomial $P$ of degree at most $D$ which is a product of non-singular polynomials 
such that the set $\R^3\setminus Z(P)$ is a disjoint union of $\sim D^3$ cells $O_i$ such that,
for every $i,$
\begin{equation}\label{D}
\int_{O_i\cap B_R} (Br_{\alpha} \E f)^{3.25}\sim D^{-3}\int_{B_R} (Br_{\alpha} \E f)^{3.25}.
\end{equation}

We next define $W$ as the $R^{1/2+\delta}$ neighborhood of $Z(P)$ and put $O_i':=(O_i\cap B_R)\setminus W.$

Moreover,  note that if we apply Proposition \ref{packets}  to $f_\tau$ in place of $f$ (what we shall usually do), then by property (a) in Proposition \ref{packets}, for every tube $T\in \T$ the function $f_{\tau,T}$ is supported in an $O(R^{-1/2})$ neighborhood of $\tau.$  Following Guth, we define
$$
\T_i:=\{T\in\T:  T\cap O_i'\ne\emptyset\},\quad f_{\tau,i}:=\sum_{T\in\T_i}
f_{\tau,T},\quad f_{S_\ell,i}:=\sum_{\tau\in\mathcal F_\ell} f_{\tau,i} \quad \text{and}\quad f_i:=\sum_\tau f_{\tau,i}.
$$
Then we can use the following analogue to Lemma 3.2 in \cite{Gu16}:
\begin{lemnr} \label{lemma3.2}
Each tube $T\in\T$ lies in at most $D+1$ of the sets $\T_i.$
\end{lemnr}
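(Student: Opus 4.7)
The plan is to reduce the three-dimensional cell-counting problem to a one-dimensional zero count along the central axis $\ell$ of $T.$ Let $\ell$ denote the axis of $T,$ and observe that since $P$ has degree at most $D,$ its restriction $P|_\ell$ is a single-variable polynomial of degree at most $D.$ If $P|_\ell\equiv 0,$ then $\ell\subset Z(P),$ and every point of $T$ lies within $R^{1/2+\delta}$ of $\ell$ and therefore within the $R^{1/2+\delta}$-neighborhood $W$ of $Z(P);$ consequently $T\cap O_i'=\emptyset$ for every $i$ and the conclusion is immediate. Otherwise $P|_\ell$ has at most $D$ zeros, so $\ell\setminus Z(P)$ splits into at most $D+1$ open arcs, each of which is connected and contained in $\R^3\setminus Z(P)$ and therefore lies in a single cell $O_i.$

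The geometric core of the argument is the following projection lemma: if $\pi\colon T\to\ell$ denotes nearest-point projection, then for every $\xi\in T\setminus W$ the points $\xi$ and $\pi(\xi)$ lie in the same cell. This exploits the exact match between the tube radius and the width of $W.$ Indeed $|\xi-\pi(\xi)|\le R^{1/2+\delta},$ while the strict inequality $d(\xi,Z(P))> R^{1/2+\delta}$ (which is simply the definition of $\xi\notin W$) yields, for every $\eta$ on the segment $[\xi,\pi(\xi)],$
$$ d(\eta,Z(P))\ge d(\xi,Z(P))-|\eta-\xi|>0. $$
Thus the segment avoids $Z(P),$ so $\xi$ and $\pi(\xi)$ sit in the same connected component of $\R^3\setminus Z(P),$ i.e.\ in the same cell $O_i.$

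To conclude, suppose $T\in\T_i$ and pick $\xi\in T\cap O_i'.$ The projection lemma then places $\pi(\xi)\in \ell\cap O_i,$ so each index $i$ with $T\in\T_i$ witnesses a distinct cell that $\ell$ actually meets. The number of such cells is bounded by the number of arcs of $\ell\setminus Z(P),$ hence by $D+1.$ No serious obstacle is anticipated: the only subtlety is that the strict inequality $d(\xi,Z(P))> R^{1/2+\delta}$ is needed to avoid the boundary case in the triangle inequality on $[\xi,\pi(\xi)],$ but this strictness is built into the definition of $W$ as an open neighborhood (equivalently, one could thicken $W$ slightly without affecting the later estimates).
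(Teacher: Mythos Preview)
Your argument is correct and is precisely the standard one: the paper does not give its own proof but simply invokes Lemma 3.2 of \cite{Gu16}, and what you have written is exactly Guth's argument there --- restrict $P$ to the axis of $T$, count zeros, and use the matching of the tube radius with the width of $W$ to project points of $T\cap O_i'$ back to the axis without crossing $Z(P)$. The only minor comment is that the paper does not specify whether $W$ is the open or closed neighborhood, but as you note this is immaterial for the application.
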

We cover $B_R$ with $\sim R^{3\delta}$ balls $B_j$ of radius $R^{1-\delta}.$
Recall Definitions  3.3 and 3.4 from \cite{Gu16}:
\begin{defns} {\rm
a) We define $\T_{j,tang}$ as the set of  all tubes $T\in\T$ that satisfy the following conditions:
$$T\cap W\cap B_j\ne\emptyset,$$
and  if $\xi\in Z(P)$ is any  nonsingular point (i.e., $\nabla P(\xi)\ne 0$) lying in $2B_j\cap 10T,$ then
$$
{\rm angle}(\nu(T), T_\xi Z(P))\le R^{-1/2+2\delta}.
$$
Here, $T_\xi Z(P)$ denotes the tangent space to $Z(P)$ at $\xi,$ and we recall that $\nu(T)$ denotes the unit vector in direction of $T.$
Accordingly, we define
$$ f_{\tau,j,tang}:=\sum_{T\in\T_{j,tang}}f_{\tau,T}\quad\text{and}\quad
f_{j,tang}:=\sum_\tau
f_{\tau,j,tang}.
$$
b) We define $\T_{j,trans}$ as the set of  all tubes $T\in\T$ that satisfy the following conditions:
$$T\cap W\cap B_j\ne\emptyset,
$$
and there exists a nonsingular point $\zeta\in Z(P)$ lying in $2B_j\cap 10T,$ so that
$$
{\rm angle}(\nu(T), T_\zeta Z(P))> R^{-1/2+2\delta}.
$$
Accordingly, we define
$$ f_{\tau,j,trans}:=\sum_{T\in\T_{j,trans}}f_{\tau,T}\quad\text{and}\quad
f_{j,trans}:=\sum_\tau f_{\tau,j,trans}.
$$}
\end{defns}
We also recall Lemmas 3.5 and 3.6 in \cite{Gu16}:
\begin{lemnr}\label{lemma3.5}
Each tube $T\in\T$ belongs to at most ${\rm Poly}(D)=R^{O(\delta_{deg})}$ different sets
$\T_{j,trans}.$
\end{lemnr}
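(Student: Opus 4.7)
The statement is purely geometric: it concerns the interaction of a fixed tube $T$ with the zero set $Z(P)$ of the polynomial $P$ and with the covering balls $B_j$, and involves neither the surface $\mathbf S_\gamma$ nor the parameter $\gamma$. Consequently, the proof should follow the argument of Lemma 3.6 in \cite{Gu16} essentially verbatim, and the task is to verify that the reduction carries over in our setting.

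Fix $T\in\T$ and let $J:=\{j:T\in\T_{j,trans}\}$, the cardinality of which is to be bounded. For each $j\in J$, select a witness point $\zeta_j\in 2B_j\cap 10T\cap Z(P)$ at which $\mathrm{angle}(\nu(T),T_{\zeta_j}Z(P))>R^{-1/2+2\delta}$. Since the cover $\{B_j\}$ is a covering of $B_R$ by $\sim R^{3\delta}$ balls of radius $R^{1-\delta}$, the dilated balls $\{2B_j\}$ have bounded overlap $O(1)$, so a pigeonhole argument passes to a sub-family $J'\subseteq J$ with $|J'|\gtrsim|J|$ along which the witnesses $\zeta_j$ are pairwise distinct and lie in essentially disjoint balls $B_j$; in particular their pairwise separation at the level of ball centers is $\sim R^{1-\delta}$.

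The heart of the argument is then to bound the number of transverse witness points of $Z(P)$ on $10T$ that are $R^{1-\delta}$-separated by $\mathrm{Poly}(D)$. Parametrize the axis $\ell$ of $T$ by arclength $s\in[0,R]$. The restriction $P\circ\ell$ is a univariate polynomial of degree $\leq D$, with at most $D$ zeros whenever it is not identically zero (the latter degenerate case being dealt with by a generic perturbation of $\ell$ inside $T$, using the non-singularity of $P$). The transversality angle $>R^{-1/2+2\delta}$ at each $\zeta_j$ translates, via a quantitative implicit function theorem, into a lower bound on the directional derivative of $P$ along $\nu(T)$ at $\zeta_j$, large relative to $|\nabla P(\zeta_j)|$. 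Equivalently, the corresponding zero of $P$ along the line through $\zeta_j$ parallel to $\nu(T)$ is a simple zero with quantitative non-degeneracy.

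The main obstacle is to pass from this one-dimensional Bezout count along $\ell$ to the three-dimensional count of witnesses throughout $10T$. Following Guth, this is accomplished by introducing auxiliary polynomials of degree $O(D)$ (derivatives of $P$ in the direction $\nu(T)$) whose zero sets capture the loci of potential transverse accumulations, and applying the degree bound a second time; the detailed execution is purely geometric/algebraic and contains no input specific to $\mathbf S_\gamma$. Combining all estimates yields $|J'|\leq\mathrm{Poly}(D)=R^{O(\delta_{deg})}$, and hence the same bound on $|J|$, which is the claim.
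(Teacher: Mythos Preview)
Your central observation is exactly right, and it is essentially all the paper says: the statement involves only the tube $T$, the polynomial $P$, and the covering balls $B_j$, and makes no reference to the surface $\mathbf S_\gamma$ or the parameter $\gamma$. Accordingly, the paper does not give its own proof at all---it simply records the statement with the phrase ``We also recall Lemmas 3.5 and 3.6 in \cite{Gu16}'' and moves on. So your reduction to Guth's argument is precisely the paper's approach.

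One small correction: the relevant reference is Lemma~3.5 in \cite{Gu16}, not Lemma~3.6. (Lemma~3.6 there is the tangential-direction count, which corresponds to Lemma~\ref{lemma3.6} here.)

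As for your sketch of Guth's argument itself: the first two paragraphs (selecting witnesses $\zeta_j$, passing to a separated subfamily via bounded overlap of the $2B_j$) are accurate. The final paragraph, however, is rather speculative. Guth's actual argument does not proceed by ``introducing auxiliary polynomials of degree $O(D)$ (derivatives of $P$)'' and ``applying the degree bound a second time.'' It is more direct: one shows that the transversality at each witness $\zeta_j$ forces the axis $\ell$ of $T$ (or a generic line in $T$ parallel to $\nu(T)$) to genuinely cross $Z(P)$ inside the corresponding ball $2B_j$, using that the tube radius $R^{1/2+\delta}$ times the reciprocal of the transverse angle $R^{-1/2+2\delta}$ is at most the ball radius $R^{1-\delta}$. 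One then invokes B\'ezout once along that single line. Since the paper does not spell this out either, your sketch is adequate as a pointer, but if you intend it as a self-contained argument you should replace the last paragraph with the direct geometric step just described.
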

\begin{lemnr}\label{lemma3.6}
For each $j,$ the number  of different $\theta$ so that
$\T_{j,tang}\cap\T(\theta)\ne\emptyset$ is at most $R^{1/2+O(\delta)}.$
\end{lemnr}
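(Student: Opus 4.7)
My proof would follow the structure of Guth's proof of Lemma 3.6 in \cite{Gu16} very closely; the only feature of our problem that enters is the geometry of the surface $\mathbf{S}_\gamma$, and this enters only through the separation of the unit normals $\nu(\theta)\in S^2$. The first task is therefore to verify that the Gauss map $\omega\mapsto\nu(\omega)$ is a diffeomorphism of $\Sigma$ onto its image in $S^2$, with Jacobian bounded above and below uniformly in $\gamma\in[-1,1]$. For $\phi_\gamma(x,y)=xy+\tfrac{\gamma}{3}y^3$ this reduces to the identity
\[
\det\mathrm{Hess}\,\phi_\gamma(x,y)=\det\begin{pmatrix}0 & 1 \\ 1 & 2\gamma y\end{pmatrix}\equiv -1.
\]
Hence the Gaussian curvature of $\mathbf{S}_\gamma$ is negative and bounded away from zero on $\Sigma$, with estimates independent of $\gamma$. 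Consequently, the $R^{-1/2}$-separated cap centers $\omega_\theta$ produce a set $\{\nu(\theta):\theta\}$ of unit vectors that is $\sim R^{-1/2}$-separated on $S^2$.

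Next, I would rewrite the defining condition of $\T_{j,\mathrm{tang}}$ as an angular condition on $S^2$. Given any $\theta$ with $\T_{j,\mathrm{tang}}\cap\T(\theta)\ne\emptyset$, pick a tube $T_\theta$ in the intersection. Since $T_\theta\cap W\cap B_j\ne\emptyset$ and $W$ is the $R^{1/2+\delta}$-neighborhood of $Z(P)$, the thickened tube $10T_\theta$ must contain a nonsingular point $\xi_\theta\in Z(P)\cap 2B_j$ at which $\mathrm{angle}(\nu(\theta),T_{\xi_\theta}Z(P))\le R^{-1/2+2\delta}$. In other words, $\nu(\theta)$ lies within angular distance $R^{-1/2+2\delta}$ of the great circle on $S^2$ cut out by the tangent plane $T_{\xi_\theta}Z(P)$.

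Finally, I would invoke the polynomial-geometric direction-counting argument from the proof of Lemma 3.6 in \cite{Gu16} (of Wongkew type), applied to the degree-$D$ variety $Z(P)$ inside the ball $2B_j$ of radius $\sim R^{1-\delta}$: the number of $R^{-1/2}$-separated unit vectors that are $R^{-1/2+2\delta}$-close to being tangent to $Z(P)$ at some nonsingular point of $2B_j$ is bounded by $D\cdot R^{1/2+O(\delta)}$. With $D=R^{\delta_{deg}}$ and $\delta_{deg}\ll\delta$, the factor $D$ is absorbed into the exponent $O(\delta)$, which yields the claim.

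The main obstacle, and the only step in which the hyperbolic surface $\mathbf{S}_\gamma$ plays a genuine role, is the separation-of-normals computation in the first step: one must ensure that the constants produced by the Gauss map are uniform in $\gamma\in[-1,1]$. For our explicit $\phi_\gamma$ this is trivial thanks to the identity $\det\mathrm{Hess}\,\phi_\gamma\equiv -1$; for more general perturbations of the saddle (as in Remark (ii) of the introduction), one would need the analogous nondegeneracy of the Gauss map. The algebraic-geometric direction count in the last step is insensitive to the original surface and transfers from \cite{Gu16} unchanged, so the main bookkeeping task is simply to track the uniformity of all constants in $\gamma$ and to verify that the implicit constants in $O(\delta)$ do not depend on $\gamma$.
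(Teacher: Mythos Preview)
Your proposal is correct and follows essentially the same approach as the paper. The paper does not give a detailed proof of this lemma but simply recalls it from \cite{Gu16}, adding only the remark that the lemma applies because the Gaussian curvature of ${\bf S_\gamma}$ does not vanish, so the Gau\ss{} map is a diffeomorphism onto its image; your proposal spells out precisely this point via the identity $\det H\phi_\gamma\equiv -1$ and then defers to Guth's direction-counting argument, which is exactly what the paper intends.
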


Note that the previous lemma makes use of the fact that the Gaussian curvature does not vanish on the surface $\Sigma,$ so that the Gau\ss{} map is a diffeomorphism onto its image.
\medskip

To motivate the next lemma, suppose we have a point $\xi$ contained in a cell $O_i'$. Then it is not hard to see that in the wave packet decomposition of $\E f(\xi)$ essentially only those tubes $T$ should matter which intersect the cell $O_i'$, that is, $T\in\mathbb T_i$. It is thus natural to expect that we may replace $\E f(\xi)$ by $\E f_i(\xi)$ with only a small error.  An analogous statement holds true even for the corresponding broad parts, as the  following analogue to  Lemma 3.7 in \cite{Gu16} shows:

\begin{lemnr}\label{lemma3.7}
If $\xi \in O_i'.$ Then, given  our assumptions on $R$  from Remarks \ref{rem4.1}, we have
$$
Br_\alpha \E f(\xi)\le  Br_{2\alpha} \E f_i(\xi)+R^{-900}\sum_\tau\|f_\tau\|_2.
$$
\end{lemnr}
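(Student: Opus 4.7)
The strategy is to exploit the wave packet decomposition of Proposition~\ref{packets} to show that, at points $\xi\in O_i'$, the values $\E f(\xi)$ and $\E f_{S_\ell}(\xi)$ differ from $\E f_i(\xi)$ and $\E f_{S_\ell,i}(\xi)$ by an extremely small error, and then to transfer the $\alpha$-broadness condition on $\E f$ to $2\alpha$-broadness for $\E f_i$ via a case distinction.

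First, fix $\xi\in O_i'$ and a cap $\tau$. By property (c) of Proposition~\ref{packets} applied to $f_\tau$, we have $\E f_\tau(\xi)=\sum_{T\in\T}\E f_{\tau,T}(\xi)+O(R^{-1000}\|f_\tau\|_2)$. The key point is that for any $T\notin\T_i$, the defining condition $T\cap O_i'=\emptyset$ combined with $\xi\in O_i'$ forces $\xi\notin T$, so property (b) gives $|\E f_{\tau,T}(\xi)|\le R^{-1000}\|f_\tau\|_2$. Since $|\T|\le R^{O(1)}$ and $R$ is sufficiently large, summing over these tubes we obtain $|\E f_\tau(\xi)-\E f_{\tau,i}(\xi)|\le R^{-990}\|f_\tau\|_2$. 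Summing over all caps $\tau$, and separately over $\tau\in\F_\ell$, yields
$$|\E f(\xi)-\E f_i(\xi)|\le E\quad\text{and}\quad|\E f_{S_\ell}(\xi)-\E f_{S_\ell,i}(\xi)|\le E\text{ for every }\ell,$$
where $E:=R^{-990}\sum_\tau\|f_\tau\|_2$.

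Next, if $\xi$ is not $\alpha$-broad for $\E f$ the inequality is trivial, so assume $\max_\ell|\E f_{S_\ell}(\xi)|\le\alpha|\E f(\xi)|$. Suppose first that $|\E f_i(\xi)|\ge 2E/\alpha$. Then for every ragged strip $S_\ell$,
$$|\E f_{S_\ell,i}(\xi)|\le\alpha|\E f(\xi)|+E\le\alpha|\E f_i(\xi)|+(\alpha+1)E\le 2\alpha|\E f_i(\xi)|,$$
so $\xi$ is $2\alpha$-broad for $\E f_i$, and the lemma follows from $|\E f(\xi)|\le|\E f_i(\xi)|+E=Br_{2\alpha}\E f_i(\xi)+E$. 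If instead $|\E f_i(\xi)|<2E/\alpha$, then $|\E f(\xi)|<3E/\alpha$; the standing hypotheses $\alpha\ge K^{-\epsilon}$, $K=e^{\epsilon^{-10}}$, and $R\ge 1000\,e^{e^{\epsilon^{-12}}}$ ensure $3/\alpha\le 3K^\epsilon\le R^{90}$, so that $|\E f(\xi)|\le R^{-900}\sum_\tau\|f_\tau\|_2$, which already gives the desired conclusion since the $Br_{2\alpha}\E f_i(\xi)$ term on the right-hand side is nonnegative.

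The main obstacle is the bookkeeping in the broadness transfer: the factor $2$ in ``$2\alpha$'' is precisely what is required to absorb the error $E$ into $\alpha|\E f_i(\xi)|$ when the latter is not too small, and the complementary fallback case relies on the lower bounds relating $R$, $K$, and $\alpha^{-1}$ from Remarks~\ref{rem4.1}, since the final bound must be phrased in terms of $\sum_\tau\|f_\tau\|_2$ rather than $|\E f(\xi)|$ itself.
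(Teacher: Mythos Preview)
Your proof is correct and follows essentially the same approach as the paper: both use the wave packet decomposition to show $\E f_\tau(\xi)\approx\E f_{\tau,i}(\xi)$ up to a negligible error, then transfer $\alpha$-broadness to $2\alpha$-broadness via a threshold case distinction. The only cosmetic difference is that the paper thresholds on $|\E f(\xi)|\ge R^{-900}\sum_\tau\|f_\tau\|_2$ whereas you threshold on $|\E f_i(\xi)|\ge 2E/\alpha$; these are equivalent in effect since the two quantities differ by at most $E$ and $1/\alpha\le K^\epsilon\ll R^{90}$.
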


\begin{proof}
Let  $\xi\in O_i'.$   By Proposition \ref{packets} c), we have
  $$\E f_\tau(\xi)=\sum_{T\in\T}\E f_{\tau,T}(\xi)+O(R^{-1000} \|f_\tau\|_2).
  $$
If $\xi\in T,$ then, $T\cap O_i'\ne\emptyset,$ i.e., $T\in\T_i.$
If $\xi\notin T,$ then Proposition \ref{packets} b) shows that $|\E f_{\tau,T}(\xi)|\le R^{-1000}\|f_\tau\|_2.$ The contribution of
these $T$'s is thus negligible.
\smallskip

 Using the short hand notation ``$\rm neglig$''  for terms which are much smaller than
$R^{-940}\sum_\tau\|f_\tau\|_2$ (and ``${\rm neglig}_\tau$''  for terms  which are much smaller than
$R^{-950}\|f_\tau\|_2$),
we thus have
\begin{equation}\label{approx1}
\E f_\tau(\xi)=\E f_{\tau,i}(\xi)+{\rm neglig}_\tau,
\end{equation}
and summing in $\tau,$
\begin{equation}\label{approx2}
\E f(\xi)=\E f_{i}(\xi)+{\rm neglig}.
\end{equation}
We can assume that $\xi$ is $\alpha$-broad for $\E f$ and that
\begin{equation}\label{approx3}
|\E f(\xi)|\ge R^{-900}\sum_\tau\|f_\tau\|_2.
\end{equation}
 Hence,
 \begin{equation}\label{bigg1}
 |\E f_i(\xi)|\ge|\E f(\xi)|-{\rm neglig}\ge\frac12R^{-900}\sum_\tau\|f_\tau\|_2.
\end{equation}

Now assume that $S_\ell$ is any of the ragged strips used in the definition of $\alpha$-broadness.  Then we have accordingly \begin{equation}\label{approx4}
\E f_{S_\ell}(\xi)=\sum_{\tau\in\mathcal F_\ell} \E f_\tau(\xi)=\sum_{\tau\in\mathcal
F_\ell} \E f_{\tau,i}(\xi)+{\rm neglig}=
\E f_{S_\ell,i}(\xi)+{\rm neglig}.
\end{equation}
Since $\xi$ is $\alpha$-broad for $\E f,$ \eqref{approx4} shows that
$$|\E f_{S_\ell,i}(\xi)|\le |\E f_{S_\ell}(\xi)|+{\rm neglig}\le\alpha|\E f(\xi)|+{\rm neglig}.
$$
Notice also that  by Remarks \ref{rem4.1}, $10^{-5}\ge\alpha\gg K^{-\epsilon}\gg K^{-100}\gg R^{-1}.$ In combination with  \eqref{approx2}, and \eqref{bigg1},  we then obtain that
\begin{equation}\label{broad2}
|\E f_{S_\ell,i}(\xi)|\le \alpha|\E f_i(\xi)|+{\rm neglig}\le 2\alpha |\E f_i(\xi)|
\end{equation}
for every  ragged strip $S_\ell.$
This estimate  shows that $\xi$ is $2\alpha$-broad for $\E f_i,$ and thus the claimed estimate in the lemma follows from
\eqref{approx2} and the assumptions that  we made subsequently.
\end{proof}

Our definition of broadness of points  was chosen  differently from Guth's, since we shall  also need a different notion  of
 ``non-adjacent'' caps. 
 This will be related to the validity of certain bilinear Fourier extension estimates which will be needed in the proof and which will be established later. In order to prepare those,
let us  review some notions and results  concerning such bilinear estimates.
\subsection{Transversality for bilinear estimates}
We shall be brief here and refer  for more details to the corresponding literature dealing
with
bilinear estimates,  for instance \cite{lee05}, \cite{v05}, \cite{lv10}, or \cite{be16}.

Following in particular  and more specifically our discussions in \cite{bmv17}, \cite{bmv18}, we first recall that
 according to Theorem 1.1 in \cite{lee05}, given two open subsets $U_1,U_2\subset [0,1]\times [0,1],$ the proper type of transversality for bilinear estimates is achieved if the
modulus of the following quantity
\begin{align}\label{transs}
\Gamma^\gamma_{z}(z_1,z_2,z_1',z_2'):=	\left\langle
(H\phi_\gamma)^{-1}(z)(\nabla\phi_\gamma(z_2)-\nabla\phi_\gamma(z_1)),\nabla\phi_\gamma(z_2')-\nabla\phi_\gamma(z_1')\right\rangle
\end{align}
is bounded from below for any $z_i=(x_i,y_i),\, z'_i=(x'_i,y'_i)\in U_i\, , i=1,2$, and $z=(x,y)\in U_1\cup
U_2,$ $H\phi_\gamma$
denoting the Hessian of $\phi$.
If such an  inequality holds, then we do have bilinear estimates with constants $C$ that
depend only
on  lower bounds of (the modulus of) in  \eqref{transs}, and on upper bounds for the
derivatives of $\phi_\gamma.$  Note that those upper bounds are independent of $\gamma\in [-1,1];$ we
will be more precise about this later. If $U_1$ and
$U_2$ are sufficiently small (with sizes depending on upper bounds of the first and second
order derivatives of $\phi_\gamma$ and a lower bound for the determinant of $H\phi_\gamma$) this
condition  reduces to the estimate
\begin{equation}\label{Gammalow}
|\Gamma^\gamma_{z}(z_1,z_2)|\geq c>0.
\end{equation}
for $z_i=(x_i,y_i)\in U_i$, $i=1,2$, $z=(x,y)\in U_1\cup U_2$, where
\begin{align}\label{trans}
\Gamma^\gamma_{z}(z_1,z_2):=	\left\langle
(H\phi_\gamma)^{-1}(z)(\nabla\phi_\gamma(z_2)-\nabla\phi_\gamma(z_1)),\nabla \phi_\gamma(z_2)-\nabla\phi_\gamma(z_1)\right\rangle.
\end{align}
The bounds in the corresponding bilinear estimates will then depend on the lower bound $c$ in \eqref{Gammalow}. In contrast to \cite{bmv17}, \cite{bmv18}, where we had to devise quite specific  ``admissible pairs'' of sets $U_1, U_2$ for our bilinear estimates, we shall here only have to consider caps $\tau_1, \tau_2,$ and the required bilinear estimates will be a of somewhat different nature. Nevertheless,  the  geometric transversality conditions that we need here will be the same.

\smallskip

It is easy to check that we explicitly have
\begin{eqnarray} \label{gammaz}
\Gamma^\gamma_{z}(z_1,z_2)
	 &=& 2(y_2-y_1)[x_2-x_1+\gamma(y_1+y_2-y)(y_2-y_1)]\\
	 &=:& 2(y_2-y_1)\, {\bf t}^\gamma_{z}(z_1,z_2). \label{TV1}
\end{eqnarray}

Since $z=(x,y)\in U_1\cup U_2,$ it will be particularly important to look at the
expression
\eqref{TV1} when $z=z_1\in U_1,$ and $z=z_2\in U_2.$  As above, if $U_1$ and
$U_2$ are sufficiently small, we can actually reduce to this case. We then see that  for
our perturbed saddle, still  the difference $y_2-y_1$  in the  $y$-coordinates plays an
important role as for the unperturbed saddle, but  in place  of  the  difference $x_2-x_1$
in the  $x$-coordinates now the quantities
\begin{align}\label{TV2}
	{\bf t}^\gamma_{z_1}(z_1,z_2):=x_2-x_1+\gamma y_2(y_2-y_1)\\
	{\bf t}^\gamma_{z_2}(z_1,z_2):=x_2-x_1+\gamma y_1(y_2-y_1)  \label{TV3}
\end{align}
become relevant.   Observe also that
 \begin{align}\label{symtrans}
 	{\bf t}^\gamma_{z}(z_1,z_2)=-{\bf t}^\gamma_{z}(z_2,z_1).
\end{align}

This definition of transversality motivates the following
\begin{defns}{\rm
a) We say that two caps $\tau_1,\tau_2$ are {\it strongly separated} if 
$$
\min\{|y^c_2-y^c_1|, \max\{|{\bf t}^\gamma_{z^c_1}(z^c_1,z^c_2)|,\,|{\bf t}^\gamma_{z^c_2}(z^c_1,z^c_2)|\}\}\ge 10 \mu^{1/2} K^{-1},\color{black}
$$
where  $z^c_1=(x^c_1,y^c_1)$ denotes the center of $\tau_1$ and $z^c_2=(x^c_2,y^c_2)$ the center of $\tau_2.$
\smallskip

\medskip

b) Following from here again \cite{Gu16}, we define
$$
{\rm Bil}(\E f_{j,tang}):=\sup_{\tau_1,\tau_2\text{ strongly separated}}|\E
f_{\tau_1,j,tang}|^{1/2}|E_\gamma f_{\tau_2,j,tang}|^{1/2}.
$$}
\end{defns}

\begin{remark}\label{Gammasize}
If the caps $\tau_1$ and $\tau_2$ are strongly separated, so that,  say,   $|y^c_2-y^c_1| \ge 10\mu^{1/2} K^{-1}$ and $ |{\bf t}^\gamma_{z^c_2}(z^c_1,z^c_2)|\}\ge 10\mu^{1/2} K^{-1},$ then by \eqref{TV1} we have 
\begin{equation}\label{Gammavar}
|\Gamma^\gamma_{z}(z_1,z_2,z_1',z_2')|\ge 4 \mu K^{-2} \quad \text{for all} \quad z_1,z'_1\in \tau_1,  \, z, z_2,z'_2\in \tau_2.
\end{equation}
\end{remark}

Indeed, one computes that 
\begin{eqnarray*}
\Gamma^\gamma_{z}(z_1,z_2,z_1',z_2')&=&-2\gamma y(y_2-y_1) (y'_2-y'_1)\\
 &+&(y'_2-y'_1)\big(x_2-x_1+\gamma (y_2^2-y_1^2)\big)
+(y_2-y_1)\big(x'_2-x'_1+\gamma ((y'_2)^2-(y'_1)^2)\big)\\
&=& (y'_2-y'_1) {\bf t}^\gamma_{z}(z_1,z_2)+(y_2-y_1){\bf t}^\gamma_{z}(z'_1,z'_2),
\end{eqnarray*}
with ${\bf t}^\gamma_{z}(z_1,z_2)$ defined in \eqref{TV1}.

Now, by \eqref{TV3}, ${\bf t}^\gamma_{z^c_2}(z^c_1,z^c_2)=x^c_2-x^c_1+\gamma (y^c_2+y^c_1- y^c_2)(y^c_2-y^c_1),$ where
$ |{\bf t}^\gamma_{z^c_2}(z^c_1,z^c_2)|\ge 10\mu^{1/2} K^{-1}.$ Since the caps $\tau_1,\tau_2$ have side lengths $\le \mu^{1/2} K^{-1},$ it is easily seen  that 
$|{\bf t}^\gamma_{z_2}(z_1,z_2) -{\bf t}^\gamma_{z^c_2}(z^c_1,z^c_2)|\le 8 \mu^{1/2} K^{-1},$ so  that ${\bf t}^\gamma_{z}(z_1,z_2)$ and ${\bf t}^\gamma_{z^c_2}(z^c_1,z^c_2)$ have the same sign and  $|{\bf t}^\gamma_{z}(z_1,z_2)| \ge 2\mu^{1/2} K^{-1}, $  and  analogously we find  that ${\bf t}^\gamma_{z}(z'_1,z'_2)$ and 
${\bf t}^\gamma_{z^c_2}(z^c_1,z^c_2)$ have the same sign, and that $|{\bf t}^\gamma_{z}(z'_1,z'_2)| \ge 2\mu^{1/2} K^{-1}.$ In a similar way, we see that  $(y'_2-y'_1)$ and  $(y_2-y_1)$ have the same sign as $(y^c_2-y^c_1),$ and that $\min \{|y'_2-y'_1|, |y_2-y_1|\}\ge 2\mu^{1/2} K^{-1},$  since  $|y^c_2-y^c_1|\ge 10 \mu^{1/2} K^{-1}.$ Therefore, 
$
|\Gamma^\gamma_{z}(z_1,z_2,z_1',z_2')|\ge 2(2\mu^{1/2} K^{-1})^2. 
$

\medskip

For any subset $I$ of the family of caps $\tau,$ we define
$$f_{I,j,trans}:= \sum_{\tau\in I} f_{\tau,j,trans}.$$

The remaining part  of this subsection will be  devoted to the proof of the following crucial analogue to  the key Lemma 3.8 in \cite{Gu16}:
\begin{lemnr}\label{lemma3.8}
If $\xi\in B_j\cap W$ and  $\alpha\mu\le 10^{-5},$ then
\begin{equation}\label{crucialbroad}
Br_\alpha\E f(\xi)\le 2\bigg(\sum_IBr_{60\alpha}\E f_{I,j,trans}(\xi)+K^{100} {\rm Bil}(\E
f_{j,tang})(\xi)+R^{-900}\sum_\tau\|f_\tau\|_2\bigg),
\end{equation}
where the first sum is over all possible subsets $I$ of the given family of caps $\tau.$
\end{lemnr}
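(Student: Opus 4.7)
The plan is to combine the tangent/transverse wave packet decomposition at $\xi$ with a dichotomy on where the tangent mass of $\E f_\tau(\xi)$ concentrates. Since $\xi\in B_j\cap W,$ any tube $T$ through $\xi$ giving non-negligible contribution to $\E f_\tau(\xi)$ must meet $W\cap B_j,$ and therefore belongs to $\T_{j,tang}\cup\T_{j,trans}.$ Hence for every cap $\tau$
$$
\E f_\tau(\xi)=\E f_{\tau,j,tang}(\xi)+\E f_{\tau,j,trans}(\xi)+O(R^{-1000}\|f_\tau\|_2),
$$
and after summing over at most $K^2$ caps, every further error from this split is $O(R^{-900}\sum_\tau\|f_\tau\|_2).$

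I would then introduce $J^*:=\{\tau:|\E f_{\tau,j,tang}(\xi)|\ge K^{-100}|\E f(\xi)|\}$ and split into two cases. \emph{Case A.} If $J^*$ contains two strongly separated caps $\tau_1,\tau_2,$ then, by the definition of $\mathrm{Bil}(\E f_{j,tang}),$
$$
K^{100}\,{\rm Bil}(\E f_{j,tang})(\xi)\ge K^{100}\cdot K^{-100}|\E f(\xi)|=|\E f(\xi)|\ge Br_\alpha\E f(\xi),
$$
and \eqref{crucialbroad} is immediate. \emph{Case B.} No two caps in $J^*$ are strongly separated. By the definition of strong separation together with \eqref{TV2}, \eqref{TV3}, pairwise non-strong-separation means either $|y^c_2-y^c_1|<10\mu^{1/2}K^{-1},$ or $\max(|{\bf t}^\gamma_{z^c_1}|,|{\bf t}^\gamma_{z^c_2}|)<10\mu^{1/2}K^{-1}.$ The first condition confines the pair to the same or an adjacent horizontal ragged strip of width $\sim\mu^{1/2}K^{-1/4};$ the second, which only arises when $|\gamma|K^{1/2}\le 1,$ confines it to a vertical or short ragged strip. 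The main technical step I expect is to upgrade these pairwise relations into the global statement that $J^*$ is contained in a bounded number $N$ (say $N\le 3$) of ragged strips $S_{\ell_1},\dots,S_{\ell_N}.$ This is exactly the point where the three-fold choice of horizontal, vertical and short strips in the definition of ragged strips has to be exploited, and where I expect to have to work hardest.

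Granting this containment, I take $I:=\{\text{all caps}\}\setminus\bigcup_{k=1}^N\mathcal F_{\ell_k},$ so $I\cap J^*=\emptyset.$ Then
$$
\E f_I(\xi)=\E f(\xi)-\sum_{k=1}^N\E f_{S_{\ell_k}}(\xi),
$$
and the $\alpha$-broadness of $\xi$ for $\E f$ together with $\alpha\mu\le 10^{-5}$ (Remarks~\ref{rem4.1}) yields $|\E f_I(\xi)|\ge(1-N\alpha)|\E f(\xi)|\ge\tfrac12|\E f(\xi)|.$ Since every $\tau\in I$ satisfies $|\E f_{\tau,j,tang}(\xi)|<K^{-100}|\E f(\xi)|$ and there are at most $K^2$ caps, the triangle inequality gives $|\E f_{I,j,tang}(\xi)|\le K^{-98}|\E f(\xi)|,$ and hence
$$
|\E f_{I,j,trans}(\xi)|\ge\tfrac12|\E f(\xi)|-K^{-98}|\E f(\xi)|-O(R^{-900}{\textstyle\sum_\tau}\|f_\tau\|_2).
$$

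It remains to verify that $\xi$ is $60\alpha$-broad for $\E f_{I,j,trans}.$ For any ragged strip $S_\ell$ with $\ell\in\{\ell_1,\dots,\ell_N\}$ one has $I\cap\mathcal F_\ell=\emptyset;$ otherwise $I\cap\mathcal F_\ell=\mathcal F_\ell,$ so
$$
|\E f_{I\cap\mathcal F_\ell,j,trans}(\xi)|\le|\E f_{S_\ell}(\xi)|+|\E f_{I\cap\mathcal F_\ell,j,tang}(\xi)|+{\rm neg}\le(\alpha+K^{-98})|\E f(\xi)|+{\rm neg}\le 2\alpha|\E f(\xi)|,
$$
using $\alpha\ge K^{-\epsilon}\gg K^{-98}.$ Dividing by the lower bound $|\E f_{I,j,trans}(\xi)|\ge\tfrac13|\E f(\xi)|$ yields a ratio $\le 6\alpha\le 60\alpha,$ whence $Br_{60\alpha}\E f_{I,j,trans}(\xi)=|\E f_{I,j,trans}(\xi)|\ge\tfrac12|\E f(\xi)|-{\rm neg}.$ Combining with $Br_\alpha\E f(\xi)=|\E f(\xi)|$ and noting that our chosen $I$ is one of the subsets in $\sum_I,$ the bound \eqref{crucialbroad} follows.
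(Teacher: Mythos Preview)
Your overall architecture matches the paper's: the same tangent/transverse split, the same definition of the exceptional set $J^*$ (the paper's $I^c$), the same bilinear case, and the same strategy of removing a bounded number of ragged strip-families to form the subset $I$ (the paper's $J^c$). The paper formalises your ``main technical step'' as a separate \emph{Geometric Lemma}, which gives not $N\le 3$ but $N\le 40$ ragged strips; this only affects constants.

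There is, however, a genuine gap in your $60\alpha$-broadness verification. You assert the dichotomy ``either $I\cap\mathcal F_\ell=\emptyset$ or $I\cap\mathcal F_\ell=\mathcal F_\ell$,'' but this is false in the regime $|\gamma|K^{1/2}\le 1$. The removed families $\mathcal F_{\ell_1},\dots,\mathcal F_{\ell_N}$ are all of one type (say long horizontal, or all long vertical, per the Geometric Lemma), while broadness must be checked against \emph{all} ragged strips: horizontal, long vertical, and short vertical. If the $\mathcal F_{\ell_k}$ are horizontal and $S_\ell$ is a long vertical strip, then $\mathcal F_\ell\cap\bigl(\bigcup_k\mathcal F_{\ell_k}\bigr)$ is typically a proper nonempty subset of $\mathcal F_\ell$, so $I\cap\mathcal F_\ell\subsetneq\mathcal F_\ell$ and your bound $|\sum_{\tau\in I\cap\mathcal F_\ell}\E f_\tau(\xi)|\le|\E f_{S_\ell}(\xi)|$ fails.

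The paper repairs this exactly here, and this is the reason short vertical strips are built into the definition of broadness in the first place. When $\ell\notin\mathcal M$, the paper observes that $\mathcal F_\ell\cap J=\bigcup_{r\in\mathcal R}\mathcal F_r$ is a union of at most $40$ families $\mathcal F_r$ associated to \emph{short} vertical ragged strips (intersections of a long horizontal with a long vertical strip). One then writes
\[
\sum_{\tau\in\mathcal F_\ell\cap J^c}\E f_\tau(\xi)=\E f_{S_\ell}(\xi)-\sum_{r\in\mathcal R}\E f_{S_r}(\xi),
\]
and applies $\alpha$-broadness to each $S_r$ as well as to $S_\ell$, obtaining the bound $41\alpha|\E f(\xi)|$ rather than $\alpha|\E f(\xi)|$. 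This is what forces the constant $60$ (rather than $6$) in $60\alpha$. Your argument needs this extra ingredient.
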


\begin{remark}
The splitting into a "transversal" and "tangential" part here is as such  not surprising. The crucial point is the presence of the bilinear term. In short, and oversimplified, a given  family of caps $\tau$ will either contain two strongly separated caps, which gives rise to the bilinear term, or  otherwise  we will see by the Geometric Lemma \ref{geometric} that the family cannot contain too many caps, and their contributions can be ``bootstrapped" by means of Lemma \ref{lemma3.5}. For the last point, broadness  will be  of utmost importance (compare \eqref{broad1}).
\end{remark}

\begin{proof}

  Let $\xi\in B_j\cap W. $ We may assume that $\xi$ is $\alpha$-broad for $\E f$ and that  $|\E
f(\xi)|\ge
R^{-900}\sum_\tau\|f\|_2.$ Let
\begin{equation}\label{I}
I:=\{\tau: |\E f_{\tau,j,tang}(\xi)|\le K^{-100} |\E f(\xi)|\}.
\end{equation}
We consider two possible cases:
\smallskip

\noindent{\bf Case 1:}  $I^c$ contains two strongly separated caps $\tau_1$ and
$\tau_2.$ Then trivially
\begin{equation}\label{bili}
|\E f(\xi)|\le K^{100}|\E f_{\tau_1,j,tang}(\xi)|^{1/2}|\E f_{\tau_2,j,tang}(\xi)|^{1/2}\le
K^{100}{\rm Bil}(\E f_{j,tan})(\xi),
\end{equation}
hence \eqref{crucialbroad}.

\noindent{\bf Case 2:}  $I^c$ does not contain two strongly separated caps.
\smallskip

\noindent In this case, we shall make use of the following lemma whose proof will be postponed to Subsection \ref{proofgeom}. Recall the fixed family $\{S_\ell\}_\ell$  of of ragged strips that was associated to our given family of caps $\tau$ (covering $\Sigma$) in Section \ref{reduction}.

\begin{lemnr}{\bf(The Geometric Lemma)}\label{geometric}  Assume that $K\ge 20,$ and let $I^c$ be any  subfamily of the given  family of caps which does not contain two strongly separated caps.

a) If $|\gamma| K^{1/2}>1,$ then all of the caps of $I^c$ belong to the union of  at most 40 of the  families $\mathcal F_m$ associated to long horizontal  ragged strips  $S_m$ of width $\mu^{1/2}K^{-1/4}.$

b) If $|\gamma| K^{1/2}\le1,$ then  either all  of the caps of $I^c$ belong to  the union of at most 3 of  families $\mathcal F_m$ associated to long horizontal ragged strips $S_m$ of width $\mu^{1/2}K^{-1/4}$, or all belong to  the union of at most 40 of the  families $\mathcal F_m$ associated to long vertical ragged  strips $S_m$ of width 
$\mu^{1/2}K^{-1/2}.$
\end{lemnr}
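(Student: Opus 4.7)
I translate the failure of strong separation for a pair $\tau_1, \tau_2 \in I^c$ with centers $z_i^c = (x_i^c, y_i^c)$ into quantitative constraints on these centers. By definition, such a pair must satisfy at least one of two alternatives: either $|y_2^c - y_1^c| < 10\mu^{1/2}K^{-1}$ (call this Type A), or $\max\{|{\bf t}^\gamma_{z_1^c}(z_1^c, z_2^c)|, |{\bf t}^\gamma_{z_2^c}(z_1^c, z_2^c)|\} < 10\mu^{1/2}K^{-1}$ (Type B). The algebraic identity ${\bf t}^\gamma_{z_1}(z_1,z_2) - {\bf t}^\gamma_{z_2}(z_1,z_2) = \gamma(y_2-y_1)^2$ forces Type B pairs to satisfy $|\gamma|(y_2^c-y_1^c)^2 < 20\mu^{1/2}K^{-1}$, and combining this with ${\bf t}^\gamma_{z_2}(z_1,z_2) = (x_2-x_1) + \gamma y_1(y_2-y_1)$ and the Cauchy--Schwarz identity $|\gamma||y_2^c - y_1^c| = |\gamma|^{1/2}\sqrt{|\gamma|(y_2^c-y_1^c)^2}$, Type B also yields $|x_2^c - x_1^c| \leq 10\mu^{1/2}K^{-1} + \sqrt{20|\gamma|\mu^{1/2}K^{-1}}$.

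\textbf{Case (a)} ($|\gamma|K^{1/2} > 1$). Substituting $|\gamma|^{-1} < K^{1/2}$ into the Type B $y$-bound gives $|y_2^c - y_1^c| < \sqrt{20}\,\mu^{1/4}K^{-1/4} \leq 10\mu^{1/2}K^{-1/4}$ (using $\mu \geq 1$ and $\sqrt{20}<10$). The Type A bound $10\mu^{1/2}K^{-1}$ is likewise dominated by $10\mu^{1/2}K^{-1/4}$. Hence every pair in $I^c$ has $|y_2^c - y_1^c| \leq 10\mu^{1/2}K^{-1/4}$, so the $y$-projections of all caps in $I^c$ fit in an interval of length at most $11\mu^{1/2}K^{-1/4}$, meeting at most $13$ horizontal bands of width $\mu^{1/2}K^{-1/4}$, well within the claimed budget of $40$ families $\mathcal{F}_m$.

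\textbf{Case (b)} ($|\gamma|K^{1/2} \leq 1$). I split into two sub-cases. \emph{Sub-case (b1):} every pair in $I^c$ has $|y_i^c - y_j^c| < 20\mu^{1/2}K^{-1}$; then all $y$-coordinates fit in an interval of length $\leq 21\mu^{1/2}K^{-1}$, which for the standing large values of $K$ is much smaller than one horizontal band, yielding at most $3$ horizontal strips. \emph{Sub-case (b2):} some pair $\tau_1, \tau_2 \in I^c$ has $|y_2^c - y_1^c| \geq 20\mu^{1/2}K^{-1}$; this pair must be Type B, and using $|\gamma| \leq K^{-1/2}$ the Type B $x$-bound becomes $|x_2^c - x_1^c| \leq 10\mu^{1/2}K^{-1} + \sqrt{20}\,\mu^{1/4}K^{-3/4}$. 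For any third cap $\tau_3 \in I^c$, a triangle inequality in $y$ rules out $\tau_3$ being Type A with \emph{both} $\tau_1$ and $\tau_2$ (else $|y_2^c - y_1^c| < 20\mu^{1/2}K^{-1}$, contradiction); hence $\tau_3$ is Type B with at least one of them, giving the same $x$-bound to $x_1^c$ or $x_2^c$, and a further triangle inequality then yields $|x_3^c - x_1^c| \leq 20\mu^{1/2}K^{-1} + 2\sqrt{20}\,\mu^{1/4}K^{-3/4}$. The $x$-projections of all caps in $I^c$ therefore lie in an interval of length $O(\mu^{1/2}K^{-1} + \mu^{1/4}K^{-3/4})$, and for $K \geq 20$ this meets at most $40$ vertical bands of width $\mu^{1/2}K^{-1/2}$.

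The main obstacle is the numerical bookkeeping in sub-case (b2): the Cauchy--Schwarz refinement is essential, since the naive estimate $|\gamma y_1(y_2-y_1)| \leq |\gamma| \leq K^{-1/2}$ alone is too loose to fit the $x$-spread within the $40$-strip budget for modest $\mu$. The triangle-inequality step that upgrades the mere existence of a single pair with large $y$-gap into a uniform $x$-bound on \emph{all} caps of $I^c$ is the combinatorial heart of the argument, and it is what cleanly produces the dichotomy (few horizontal strips) versus (few vertical strips) in Case (b).
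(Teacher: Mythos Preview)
Your approach is essentially the same as the paper's: both exploit the identity ${\bf t}^\gamma_{z_1}(z_1,z_2) - {\bf t}^\gamma_{z_2}(z_1,z_2) = \gamma(y_2-y_1)^2$ to show that whenever two caps fail to be strongly separated, either their $y$-centres are close or (via the square-root bound) both their $y$- and $x$-centres are close, and then separate regimes by the size of $|\gamma|K^{1/2}$.

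The one substantive difference is in your sub-case (b2). The paper instead fixes the pair $\tau_1,\tau_2$ realizing the \emph{maximal} $y$-gap, so that every other centre $y_k^c$ is sandwiched between $y_1^c$ and $y_2^c$; it then bounds $|x_k^c - x_1^c|$ by $|{\bf t}^\gamma_{z_1^c}(z_1^c,z_k^c)| + |\gamma||y_k^c||y_k^c - y_1^c|$, tacitly using that the pair $(\tau_1,\tau_k)$ is itself of Type~B. Your triangle-inequality device (if $\tau_3$ were Type~A with both $\tau_1$ and $\tau_2$ then $|y_2^c-y_1^c|<20\mu^{1/2}K^{-1}$, a contradiction, so $\tau_3$ is Type~B with at least one of them) is arguably a more transparent way to secure the $x$-bound for \emph{every} cap, and is precisely what forces you to use the threshold $20\mu^{1/2}K^{-1}$ rather than $10\mu^{1/2}K^{-1}$ in the dichotomy. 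The price is a marginally worse constant in (b1): your ``at most $3$'' horizontal strips is tight for $K$ near $20$, but the paper's own proof has the same slack (it writes ``since $K\ge 30$'' at the analogous spot), and in the intended application $K=e^{\epsilon^{-10}}$ is enormous. The numerical bounds you obtain in (a) and (b2) are comfortably inside $40$.
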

\begin{remark}\label{ongeom}
Note that the two cases in a) and b) basically  match with the corresponding distinction of cases in our definition of $\alpha$-broad points. For our subsequent argument this distinction will, however, not be relevant.
\end{remark}

Using the Geometric Lemma we finish the proof of Lemma \ref{lemma3.8} as follows. We
denote by $\{S_m\}_{m\in\mathcal M}$ the subset of  at most 40 long ragged strips given by the Geometric Lemma. 
By
$$J:=\bigcup\limits_{m\in\mathcal M}\mathcal F_m$$
we denote     the corresponding subset of caps $\tau.$ Then $I^c\subset J,$ i.e.,  $J^c\subset I.$ We write
\begin{align}
	f=\sum_{m\in\mathcal M}\sum_{\tau\in\mathcal F_{m}} f_\tau+\sum_{\tau\in J^c}f_\tau.
\end{align}
Hence,
$$
|\E f(\xi)|\leq\sum_{m\in\mathcal M}|\E f_{S_m}(\xi)|+|\sum_{\tau\in J^c}\E f_\tau(\xi)|.
$$
Since $\xi$ is $\alpha$-broad,
\begin{equation}\label{broad1}
\sum_{m\in\mathcal M} |\E f_{S_m}(\xi)|\le\sum_{m\in\mathcal M} \alpha|\E f(\xi)|\le 40\alpha |\E f(\xi)|\le\frac1{10}|\E f(\xi)|,
\end{equation}
where  the last inequality holds because  we are assuming that $\alpha\le 10^{-5}$ (compare Remark \ref{rem4.1} a)).
Thus,
$$
|\E f(\xi)|\le \frac1{10}|\E f(\xi)|+|\sum_{\tau\in J^c}\E f_\tau(\xi)|,
$$
and therefore
$$
|\E f(\xi)|\le \frac{10}9 |\sum_{\tau\in J^c}\E f_\tau(\xi)|.
$$
Since $\xi\in B_j\cap W,$ by Proposition \ref{packets},
\begin{align}\label{4.21n}
	\E f_\tau(\xi)=\E f_{\tau,j,trans}(\xi)+\E f_{\tau,j,tang}(\xi)+ O(R^{-1000})\|f_\tau\|_2.
\end{align}

Moreover, since $J^c\subset I,$ and  since there are at most $K^2$ caps $\tau,$
\begin{equation}\label{tangesti}
\sum_{\tau\in J^c}|\E f_{\tau,j,tang}(\xi)|\le\sum_{\tau\in I}|\E f_{\tau,j,tang}(\xi)|\le
K^{-100}\sum_{\tau\in I}|\E f(\xi)|\le K^{-98}|\E f(\xi)|,
\end{equation}
where the second inequality is a consequence of the definition of $I.$
Thus,
\begin{eqnarray*}
\frac 9{10}|\E f(\xi)|&\le& |\sum_{\tau\in J^c}\E f_{\tau,j,trans}(\xi)|+K^{-98}|\E
f(\xi)|+\sum_\tau R^{-1000}\|f_\tau\|_2\\
&=&|\E f_{J^c,j,trans}(\xi)|+K^{-98}|\E f(\xi)|+\sum_\tau R^{-1000}\|f_\tau\|_2,
\end{eqnarray*}
and hence, since $|\E f(\xi)|\ge R^{-900}\sum\|f_\tau\|_2,$
\begin{equation}\label{11-9}
|\E f(\xi)|\le\frac{11}9|\E f_{J^c,j,trans}(\xi)|.
\end{equation}
\medskip
It will then finally suffice to show  that $\xi$ is $60\alpha$-broad for $\E g,$ where  $g:= f_{J^c,j,trans}.$
To this end let us set $g_\tau:=f_{\tau,j,trans},$ if $\tau\in J^c,$ and zero otherwise,  so that
$$g=\sum g_\tau.$$

Observe first that  by \eqref{4.21n}
$$
|\E f_{\tau,j,trans}(\xi)|\le|\E f_\tau(\xi)|+|\E f_{\tau,j,tang}(\xi)|+{\rm neglig},
$$
so that  if $\tau\in J^c\subset I,$ then by the definition of $I,$
\begin{equation}\label{transesti}
|\E f_{\tau,j,trans}(\xi)|\le|\E f_\tau(\xi)|+K^{-100}|\E f(\xi)|+{\rm neglig}.
\end{equation}

\medskip

We have to show that
$$
|\E g_{S_\ell}(\xi)|\le  60\alpha |\E g(\xi)|
$$
for all ragged strips $S_\ell.$  But,
$g_{S_\ell}=\sum_{\tau\in \mathcal F_\ell\cap J^c}f_{\tau,j,trans},$ and therefore the following two cases can arise:
\begin{enumerate}
	\item If $\ell\in \mathcal M$, then $\F_\ell\cap J^c=\emptyset$.
	\item If $\ell\notin \mathcal M$,  then by our construction of  the set $J$ there is a collection $\{\mathcal F_r\}_{r\in\mathcal R}$ of at most 40 families (possibly empty)  associated to short vertical ragged strips so that $\mathcal F_\ell\cap J=\cup_{\mathcal R} \mathcal F_r$ (cf. Figure \ref{grid}).
\end{enumerate}

\begin{figure}[h]\begin{center}
  \includegraphics[scale=0.25]{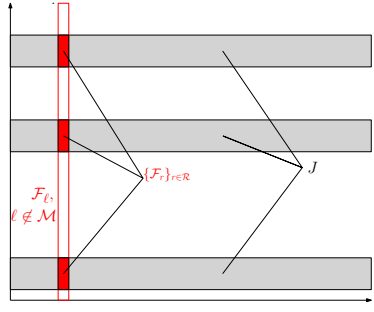}
\caption{\small Intersection of long horizontal and long vertical strips}
\label{grid}
\end{center}\end{figure}

\smallskip

Observe first that by summing \eqref{4.21n} over all $\tau\in\mathcal F_\ell\cap J^c$ we obtain
$$
	|\E g_{S_\ell}(\xi)|\leq|\sum_{\tau\in \mathcal F_\ell\cap J^c}\E f_{\tau}(\xi)|+\sum_{\tau\in \mathcal F_\ell\cap J^c}|\E f_{\tau,j,tang}(\xi)|+{\rm neglig}.
$$
By \eqref{tangesti}, the second term can again be estimated  by
$$
	\sum_{\tau\in \mathcal F_\ell\cap J^c}|\E f_{\tau,j,tang}(\xi)|
	\leq K^{-98}|\E f(\xi)|.
$$
Case (i) is trivial.  In case (ii), we write
$$
\sum_{\tau\in \mathcal F_\ell\cap J^c}\E f_{\tau}(\xi)=\E f_{S_\ell}(\xi)-\sum_{r\in\mathcal R} \E f_{S_r}(\xi).
$$
Since $\xi$ is $\alpha$-broad for $\E f$,
 both terms are estimated using again broadness:
$$
|\sum_{\tau\in \mathcal F_\ell\cap J^c}\E f_{\tau}(\xi)|\leq 41\alpha |\E f(\xi)|.
$$
Since $\alpha\ge K^{-\epsilon}\gg 10 K^{-98},$ in combination with
\eqref{11-9} we conclude that
\begin{eqnarray*}
|\E g_{S_\ell}(\xi)| &\le &41\alpha |\E f(\xi)|+K^{-98}|\E f(\xi)|+{\rm neglig}\le(41+1/2) \alpha |\E f(\xi)| \\
&\le&  60 \alpha |\E f_{J^c,j,trans}(\xi)| =60 \alpha |\E g(\xi)|.
\end{eqnarray*}

This completes the proof of Lemma \ref{lemma3.8}.
\end{proof}
\medskip

The contribution by the bilinear term in \eqref{crucialbroad} will be controlled by means of the following analogue to Proposition 3.9 in \cite{Gu16}:
\begin{pronr}\label{prop3.9} We have
$$
\int_{B_j\cap W}{\rm Bil}(\E f_{j,tang})^{3.25}\le C_\epsilon R^{O(\delta)+\epsilon/2}\bigg(\sum_\tau\int
|f_\tau|^2\bigg)^{3/2+\epsilon}.
$$
\end{pronr}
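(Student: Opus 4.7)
This is the analogue of Proposition 3.9 in \cite{Gu16} for our perturbed saddle. I would follow Guth's strategy closely, with the bilinear extension theorem of Lee \cite{lee05} and Vargas \cite{v05}, adapted to our surfaces in \cite{bmv18}, taking the role played in \cite{Gu16} by Tao's bilinear theorem.

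First, linearise the supremum defining $\mathrm{Bil}$: pointwise,
\[
\mathrm{Bil}(\E f_{j,tang})^{3.25}\le \sum_{\tau_1,\tau_2\text{ s.s.}} |\E f_{\tau_1,j,tang}|^{13/8}|\E f_{\tau_2,j,tang}|^{13/8}
\]
(with at most $K^4$ pairs, a loss absorbable in $R^{O(\delta)}$). Integrating over $B_j\cap W$ produces, for each pair, the $L^{13/8}$ bilinear norm of $\E f_{\tau_1,j,tang}\cdot \E f_{\tau_2,j,tang}$ on $B_j\cap W$. For strongly separated caps, the transversality quantity satisfies $|\Gamma^\gamma|\gtrsim\mu K^{-2}$ by Remark \ref{Gammasize}, which is exactly the hypothesis needed to feed into the Lee-Vargas bilinear estimate
\[
\|\E f_{\tau_1}\cdot \E f_{\tau_2}\|_{L^{p_0}(\R^3)}\le C_\epsilon R^{\epsilon/8}(\mu K^{-2})^{-N_0}\|f_{\tau_1}\|_2\|f_{\tau_2}\|_2,
\]
valid for any $p_0>5/3$ and some fixed $N_0$.

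Since the target exponent $13/8$ lies just below the bilinear threshold $5/3$, the bilinear bound cannot be applied directly at $L^{13/8}$; this is the point at which the tangential localization becomes essential. The tangential wave packets are concentrated in tubes tangent to $Z(P)$, so on $B_j$ they live in the $R^{1/2+\delta}$-neighbourhood of the two-dimensional surface $Z(P)\cap 2B_j$; by a Wongkew-type bound one has $|B_j\cap W|\lesssim R^{5/2+O(\delta)}$, a factor $R^{-1/2+O(\delta)}$ smaller than the trivial volume of $B_j$. Combined with Lemma \ref{lemma3.6} (at most $R^{1/2+O(\delta)}$ tangentially relevant caps $\theta$ per $B_j$), this gain compensates for the loss incurred when one H\"older-interpolates the $L^{p_0}$ bilinear estimate down to $L^{13/8}$. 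The value $p=3.25$ is precisely the exponent at which these gains and losses balance, which is why it is the natural endpoint of the argument.

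Finally I would sum over pairs of strongly separated caps and use Cauchy-Schwarz together with the hypothesis \eqref{average} to convert $\sum_{\tau_1,\tau_2}\|f_{\tau_1}\|_2^{13/8}\|f_{\tau_2}\|_2^{13/8}$ into $(\sum_\tau\|f_\tau\|_2^2)^{3/2+\epsilon}$, any surplus $K$-power being absorbed by $R^{O(\delta)}$. The main obstacle I anticipate is controlling the factor $(\mu K^{-2})^{-N_0}$ coming from the bilinear constant: since $\mu\le K^2$ and $K=K(\epsilon)$ is fixed, this loss is at most $K^{C(\epsilon)}$, so by Remark \ref{rem4.3} we may take $R$ large enough that $R^\delta$ dominates $K^{C(\epsilon)}$, absorbing the loss into the final factor $R^{O(\delta)+\epsilon/2}$. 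A secondary technical point is that the bilinear theorem of \cite{bmv18} must be available in the quantitative form stated above, uniformly in $\gamma\in[-1,1]$; this was in fact one of the main outputs of that paper.
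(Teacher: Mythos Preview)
Your route is plausible and does close numerically, but it is \emph{not} the paper's approach, and your opening remark mischaracterises Guth's argument: neither \cite{Gu16} nor the present paper invokes the full bilinear restriction theorem (Tao's, or Lee--Vargas's) in proving Proposition~\ref{prop3.9}.

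What the paper actually does, following \cite{Gu16} verbatim, is to establish an $L^4$ C\'ordoba-type estimate on $R^{1/2}$-cubes $Q\subset B_j\cap W$ (Lemma~\ref{lemma4.11}, the analogue of Guth's Lemma~3.10). After Plancherel one is left with a sum over quadruples $(T_1,T_2,T_1',T_2')$ of tangent tubes; the tangential hypothesis forces $|\det(N_\gamma(z_1),N_\gamma(z_2),N_\gamma(z_2'))|\lesssim R^{-1/2+2\delta}$, and an intersection-curve computation---where the transversality bound $|\Gamma^\gamma|\gtrsim \mu K^{-2}$ of Remark~\ref{Gammasize} enters as a \emph{lower} bound on a derivative along the curve---shows $|z_2-z_2'|\lesssim K^2 R^{-1/2+2\delta}$, so only $R^{O(\delta)}$ quadruples survive. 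This yields
\[
\|{\rm Bil}(\E f_{j,tang})\|_{L^4(B_j\cap W)}\lesssim R^{-1/8+O(\delta)}\Big(\sum_\tau\|f_{\tau,j,tang}\|_2^2\Big)^{1/2},
\]
which is then H\"older-interpolated with the trivial $L^2$ extension bound to reach $p=3.25$, after which Lemma~\ref{lemma3.6} converts the exponent $13/8$ to $3/2+\epsilon$ exactly as in your final step.

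Your alternative---black-boxing the Lee--Vargas bilinear theorem at $p_0>5/3$ and recovering the tangential gain via a Wongkew bound $|B_j\cap W|\lesssim R^{5/2+O(\delta)}$---lands at the same intermediate point $R^{1/16+O(\delta)}(\sum_\tau\|f_{\tau,j,tang}\|_2^2)^{13/8}$, so it is a genuine substitute. The trade-off is that you import two heavier external inputs (the full bilinear theorem of \cite{bmv18} with quantitative dependence on the transversality, and Wongkew's theorem), whereas the paper's $L^4$ argument is elementary, using only Plancherel and the intersection-curve geometry; on the other hand your route would bypass that curve computation, which is the one place where the specific cubic form of $\phi_\gamma$ is used in detail.
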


With Proposition  \ref{prop3.9} at hand, the rest  of the proof of Theorem \ref{largetheorem}, which we shall detail in the next subsection,  will be a  literal copy of the arguments in pages 396-398 of \cite{Gu16}.
\smallskip

The proof of this  proposition can easily be reduced to the following analogue to Lemma 3.10 in \cite{Gu16}. We shall give some details below. It is in this lemma where we shall need the full thrust of the strong separation condition between caps $\tau_1$ and $\tau_2.$   Suppose we have covered $B_j\cap W$ with a minimal number of cubes $Q$  of side length $R^{1/2},$   and denote by $\T_{j,tang,Q}$ the set of all tubes $T$ in $\T_{j,tang}$ such that $10T$ intersects $Q.$
\begin{lemnr}\label{lemma4.11} Fix $j,$ i.e., a ball $B_j.$
If $\tau_1,\tau_2$ are strongly separated cups, then for any of the cubes $Q$  we have
\begin{eqnarray*}
&&\int_Q|\E f_{\tau_1,j,tang}|^2|\E f_{\tau_2,j,tang}|^2\\
&&\phantom\qquad\le
R^{O(\delta)}R^{-1/2}
\big(\sum_{T_1\in\T_{j,tang,Q}}\|f_{\tau_1,T_1}\|_2^2\big)
\big(\sum_{T_2\in\T_{j,tang,Q}}
\|f_{\tau_2,T_2}\|_2^2\big)+{\rm neglig}.
\end{eqnarray*}
\end{lemnr}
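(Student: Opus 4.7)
The lemma is a local bilinear $L^2$ estimate on the cube $Q$ of side $R^{1/2}$. The $R^{-1/2}$ factor arises from combining a single-pair pointwise bound with a Plancherel-based quasi-orthogonality argument for summing over wave-packet tubes. Two ingredients are decisive: the strong separation of $\tau_1,\tau_2$ (which, via Remark \ref{Gammasize}, gives the transversality lower bound $|\Gamma^\gamma| \ge 4\mu K^{-2}$ on $\tau_1\times\tau_2$), and the tangential structure of $\mathbb{T}_{j,tang}$ combined with Proposition \ref{packets}.

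\smallskip

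\textit{Step 1 (Localization to tubes near $Q$).}
By Proposition \ref{packets}(b), any tube $T\in\mathbb T_{j,tang}(\tau_i)$ with $10T\cap Q = \emptyset$ contributes only $O(R^{-1000}\|f_{\tau_i,T}\|_2)$ to $\E f_{\tau_i, T}$ on $Q$. Summing over $O(R^{O(1)})$ tubes, this is absorbed into the ``$\rm neglig$'' term. Hence we may replace $\E f_{\tau_i, j, tang}|_Q$ by $g_i := \sum_{T\in\mathbb T_{j,tang,Q}(\tau_i)} \E f_{\tau_i, T}$.

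\smallskip

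\textit{Step 2 (Single-pair pointwise bound).}
By Cauchy-Schwarz and the support bound $\operatorname{supp} f_{\tau_i, T_i}\subset 3\theta$, $|\theta|\le CR^{-1}$ (Proposition \ref{packets}(a)), one has $\|\E f_{\tau_i, T_i}\|_{L^\infty} \le \|f_{\tau_i, T_i}\|_{L^1} \le CR^{-1/2}\|f_{\tau_i,T_i}\|_{L^2}$. Combined with $|Q| = R^{3/2}$, this yields for any single pair $(T_1, T_2)$
\begin{equation*}
\int_Q |\E f_{\tau_1, T_1}|^2\,|\E f_{\tau_2, T_2}|^2 \le C R^{-1/2}\,\|f_{\tau_1, T_1}\|_2^2\,\|f_{\tau_2, T_2}\|_2^2,
\end{equation*}
which already exhibits the $R^{-1/2}$ factor.

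\smallskip

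\textit{Step 3 (Summing with quasi-orthogonality).}
To upgrade Step 2 to the full wave-packet sum, introduce a Schwartz cutoff $\chi_Q$ with $\chi_Q\ge 1_Q$ and $\widehat{\chi_Q}$ supported in $B(0,R^{-1/2})$. Then $\int_Q|g_1|^2|g_2|^2 \le \|\chi_Q g_1\cdot\chi_Q g_2\|_{L^2(\R^3)}^2$, which by Plancherel equals $\|\widehat{\chi_Q g_1}*\widehat{\chi_Q g_2}\|_{L^2}^2$. The Fourier supports of $\chi_Q g_i$ live in an $R^{-1/2}$-thickening of the piece of ${\bf S_\gamma}$ above $\tau_i$. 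The Jacobian of the ``sumset'' map $(z_1,z_2)\mapsto (z_1+z_2,\phi_\gamma(z_1)+\phi_\gamma(z_2))$ is (up to a constant) the quantity $\Gamma^\gamma$, which is $\ge 4\mu K^{-2}$ uniformly on $\tau_1\times\tau_2$ by Remark \ref{Gammasize}. A standard C\'ordoba-Fefferman calculation then yields a bilinear $L^2$ bound reducing the sum to the diagonal pairs, while Fourier orthogonality across distinct $\theta$-caps (Proposition \ref{packets}(d)) and the $R^{O(\delta)}$-bound on tube multiplicity within a single $\theta$-cap take care of off-diagonal wave-packet cross-terms. The $\mu^{-1}K^{2}$ loss from $|\Gamma^\gamma|^{-1}$ is subsumed in $R^{O(\delta)}$ via $K=e^{\epsilon^{-10}}$ and $\delta_{trans}=\epsilon^6\ll\delta=\epsilon^2$.

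\smallskip

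\textit{Main obstacle.}
The delicate point is Step 3: preserving the $R^{-1/2}$ gain of Step 2 after summing over the possibly $R^{1/2+O(\delta)}$ different tangential $\theta$-caps counted by Lemma \ref{lemma3.6}. A naive Cauchy-Schwarz in the tube sum would recoup precisely this $R^{1/2+O(\delta)}$ factor and destroy the bound. The fix is to combine Fourier orthogonality across distinct $\theta$-caps (which survives the $R^{-1/2}$-smearing induced by $\chi_Q$) with the physical almost-disjointness of wave packets within a single $\mathbb T(\theta)$, and to execute the C\'ordoba-Fefferman bilinear $L^2$ bound carefully so that the only transversality loss is the uniform one from Remark \ref{Gammasize}, which is harmless. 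This bookkeeping mirrors the argument for Lemma 3.10 in \cite{Gu16}, adapted to our hyperbolic surface and to the generalized caps with multiplicity $\mu$.
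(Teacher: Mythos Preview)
Your Step~3 has a genuine gap: you never actually use the tangential structure of $\T_{j,tang}$, and without it the $R^{-1/2}$ gain from Step~2 is destroyed when you sum over wave packets.  The point is that the ``sumset'' map $(z_1,z_2)\mapsto \big(z_1+z_2,\phi_\gamma(z_1)+\phi_\gamma(z_2)\big)$ goes from $\R^4$ to $\R^3$, so its fibers are \emph{curves}, not points; the lower bound on $\Gamma^\gamma$ only tells you that the map is a submersion, not that distinct $(\theta_1,\theta_2)$-pairs have disjoint images.  Concretely, given $(T_1,T_2)$ there is a one-parameter family of pairs $(T_1',T_2')$ whose Fourier supports overlap those of $(T_1,T_2)$ after the $R^{-1/2}$-smearing, and this family meets $\sim R^{1/2}$ pairs of $\theta$-caps.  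A Schur-type bound then recovers only $\int_Q|g_1|^2|g_2|^2\lesssim R^{O(\delta)}\big(\sum\|f_{\tau_1,T_1}\|_2^2\big)\big(\sum\|f_{\tau_2,T_2}\|_2^2\big)$, with no $R^{-1/2}$.  The ``Fourier orthogonality across distinct $\theta$-caps'' you invoke is simply false in this bilinear setting.

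What the paper does---and what your sketch is missing---is to exploit that all tubes in $\T_{j,tang,Q}$ have directions lying within $R^{-1/2+2\delta}$ of a common \emph{plane} (the tangent plane to $Z(P)$ near $Q$).  This forces the three normals $N_\gamma(z_1),N_\gamma(z_2),N_\gamma(z_2')$ to be nearly coplanar, i.e.\ $|\det(N_\gamma(z_1),N_\gamma(z_2),N_\gamma(z_2'))|\le R^{-1/2+2\delta}$.  One then parametrizes the solution curve $t\mapsto (z_1'(t),z_2(t))$ by arc length and computes that along it $\big|\det\big(N_\gamma(z_1),N_\gamma(z_2),\tfrac{d}{dt}N_\gamma(z_2(t))\big)\big|=|\Gamma^\gamma_{z_2(t)}(z_1,z_2,z_1'(t),z_2(t))|/|\nabla\phi_\gamma(z_2(t))-\nabla\phi_\gamma(z_1'(t))|\ge \mu K^{-2}$ by Remark~\ref{Gammasize}.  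Integrating, this forces the arc-length of the curve to be $\le K^2R^{-1/2+2\delta}$, so only $R^{O(\delta)}$ quadruples $(T_1,T_1',T_2,T_2')$ interact, and Schur's lemma now closes.  In short, the strong separation bounds the \emph{speed} along the solution curve, but it is the tangency that bounds its \emph{length}; your proposal uses only the former.
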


Indeed, the main  ingredient in Guth's argument that needs to be checked here is the following geometric property (compare p. 402 in \cite{Gu16}):

\begin{lemnr}\label{anglesep}
If $\tau_1$ and $\tau_2$ are two strongly separated caps, then, for any two points $z_1=(x_1,y_1)\in \tau_1$ and $z_2=(x_2,y_2)\in \tau_2$ the  angle between the normals to ${\bf S_\gamma}$ at the corresponding points on ${\bf S_\gamma}$ is $\gtrsim K^{-1}.$
\end{lemnr}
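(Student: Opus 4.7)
The plan is to reduce the claim to a lower bound on $|\nabla\phi_\gamma(z_2)-\nabla\phi_\gamma(z_1)|$, and then to notice that the $y$-component of the strong separation condition alone already suffices.

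First, I would recall that at a point $(x,y,\phi_\gamma(x,y))$ of $\mathbf{S}_\gamma$, the unit normal is
\begin{equation*}
\nu(z)=\frac{1}{\sqrt{1+|\nabla\phi_\gamma(z)|^2}}\bigl(-\nabla\phi_\gamma(z),-1\bigr),\qquad \nabla\phi_\gamma(x,y)=(y,\,x+\gamma y^2).
\end{equation*}
Since $z_1,z_2\in\Sigma\subset [0,1]^2$ and $|\gamma|\le 1$, the quantity $|\nabla\phi_\gamma|$ is bounded by an absolute constant on $\Sigma$, hence the normalising factors in $\nu(z_1)$ and $\nu(z_2)$ lie between two positive absolute constants. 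Consequently
\begin{equation*}
\mathrm{angle}(\nu(z_1),\nu(z_2))\ \asymp\ |\nu(z_1)-\nu(z_2)|\ \asymp\ |\nabla\phi_\gamma(z_2)-\nabla\phi_\gamma(z_1)|,
\end{equation*}
with implicit constants independent of $\gamma\in[-1,1]$.

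Next, I would compute
\begin{equation*}
\nabla\phi_\gamma(z_2)-\nabla\phi_\gamma(z_1)=\bigl(y_2-y_1,\ (x_2-x_1)+\gamma(y_2^2-y_1^2)\bigr),
\end{equation*}
whose norm is trivially at least $|y_2-y_1|$. It therefore suffices to prove $|y_2-y_1|\gtrsim K^{-1}$, and for this I would use only the first clause of the strong separation hypothesis, namely $|y_2^c-y_1^c|\ge 10\mu^{1/2}K^{-1}$. Since each cap has side length at most $\mu^{1/2}K^{-1}$, the $y$-coordinate of any $z_i\in\tau_i$ lies within $\mu^{1/2}K^{-1}$ of $y_i^c$, so
\begin{equation*}
|y_2-y_1|\ \ge\ |y_2^c-y_1^c|-2\mu^{1/2}K^{-1}\ \ge\ 8\mu^{1/2}K^{-1}\ \ge\ 8K^{-1},
\end{equation*}
using $\mu\ge 1$. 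Chaining the two displays yields $\mathrm{angle}(\nu(z_1),\nu(z_2))\gtrsim K^{-1}$, as desired.

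There is no real obstacle here: the heart of the matter is that strong separation was deliberately formulated to include a direct lower bound on $|y_2^c-y_1^c|$ comparable to the cap size, and this alone dominates one component of the gradient difference. The companion bound on $\mathbf{t}^\gamma_{z_i^c}(z_1^c,z_2^c)$ in the definition of strong separation is not used in this lemma; that piece is what is needed for the stronger quantitative statement \eqref{Gammavar} driving the bilinear transversality (cf. Remark \ref{Gammasize}), and should not be conflated with the angular separation between normals established here.
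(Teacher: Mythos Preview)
Your argument is correct and follows essentially the same route as the paper's own proof: both reduce the angle to $|\nabla\phi_\gamma(z_2)-\nabla\phi_\gamma(z_1)|$, observe that this is at least $|y_2-y_1|$, and invoke the $y$-clause of strong separation; you have simply made explicit the passage from the center separation $|y_2^c-y_1^c|\ge 10\mu^{1/2}K^{-1}$ to arbitrary points (via the cap side length $\le \mu^{1/2}K^{-1}$), which the paper leaves implicit.
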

\begin{proof}
By $N_\gamma(x,y)$ we denote the following normal to our surface ${\bf S_\gamma}$  at $(x,y,\phi_\gamma(x,y))\in {\bf S_\gamma}:$
\begin{equation}\label{Normal}
N_\gamma(x,y):=
\left(
\begin{array}{c}
  {\,}^t\nabla\phi_\gamma(x,y)   \\
-1
\end{array}
\right).
\end{equation}
Note that these normal vectors are of size $|N_\gamma(x,y)|\sim 1.$
Since $\nabla\phi_\gamma(x,y)=(y, x+\gamma y^2),$ we see that
\begin{equation}\label{deltanabla}
|\nabla\phi_\gamma(x_2,y_2)-\nabla\phi_\gamma(x_1,y_1)|\ge |y_2-y_1| \gtrsim 10K^{-1}
\end{equation}
since $\tau_1$ and $\tau_2$ are strongly separated. This implies the claim about the angle.
\end{proof}

With this at hand, we can follow Guth to deduce from Lemma \ref{lemma4.11}  the following $L^4$ estimate
\begin{equation}\label{L4}
\|{\rm Bil}(\E f_{j,tang})\|_{L^4(B_j\cap W)}\le R^{O(\delta)}R^{-1/8}(\sum_\tau\|f_{\tau,j,tan}\|_2^2)^{1/2}+{\rm neglig},
\end{equation}
which corresponds to inequality (43) in \cite{Gu16}.
 Indeed,  we can use the standard estimate
$$
\|\E f\|_{L^2(B_R)}\lesssim R^{1/2}\|f\|_2
$$
to deduce that
$$
\|{\rm Bil}(\E f_{j,tang})\|_{L^2(B_j\cap W)}\le
R^{1/2}(\sum_\tau\|f_{\tau,j,tan}\|_2^2)^{1/2}.
$$
From this and \eqref{L4}, by H\"older's inequality we get  for $2\le p\le 4,$
$$
\int_{B_j\cap W}{\rm Bil}(\E f_{j,tang})^{p}\lesssim R^{O(\delta)} R^{\frac52-\frac34p}\bigg(\sum_\tau\|f_{\tau,j,tang}\|_2^2
\bigg)^{p/2}+{\rm neglig}.
$$
Lemma \ref{lemma3.6} tells us that $\mathbb T_{j,tang}$ contains tubes in only $R^{O(\delta)}R^{1/2}$ directions. Hence, each function $f_{\tau,j,tang}$ is supported on at most $R^{O(\delta)}R^{1/2}$ caps $\theta.$ By Proposition \ref{packets},
$$
\oint_{\theta}|f_{\tau,j,tang}|^2\lesssim \oint_{10\theta}|f_{\tau}|^2\lesssim 1.
$$
Adding the contribution of  $R^{O(\delta)}R^{1/2}$ caps $\theta,$ we get $\int |f_{\tau,j,tang}|^2\le C R^{O(\delta)}R^{-1/2}.$
Since there are $K^2\ll R^{O(\delta)}$ caps $\tau,$ this implies that $\sum_\tau\|f_{\tau,j,tang}\|_2^2\le C R^{O(\delta)}R^{-1/2}.$
Hence, we get, for $p>3,$ $\epsilon<2(p-3),$
$$
\int_{B_j\cap W}{\rm Bil}(\E f_{j,tang})^{p}\lesssim R^{O(\delta)} R^{\frac52-\frac34p-\frac12(\frac p2-\frac32-\epsilon)}\bigg(\sum_\tau\|f_{\tau,j,tang}\|_2^2 \bigg)^{3/2+\epsilon}.
$$
This finishes the proof of Proposition \ref{prop3.9}, for $p=3.25=\frac{13}4.$

\bigskip

\noindent{\it Proof of  Lemma \ref{lemma4.11}.}   Let  $\tau_1$ and $\tau_2$ be two strongly separated caps, and assume without loss of generality that $\min\{|y^c_2-y^c_1|, |{\bf t}^\gamma_{z^c_2}(z^c_1,z^c_2)|\}\ge 10\mu^{1/2} K^{-1},$ where $z_1^c=(x^c_1,y^c_1)$ denotes the center of $\tau_1$ and $z_2^c=(x^c_2,y^c_2)$  the center of $\tau_2.$ 

Following in a first step a standard argument as in  \cite{Gu16} based on Plancherel's theorem,  and making use of Proposition \ref{packets} we see that
\begin{eqnarray}\nonumber
&&\int_Q|\E f_{\tau_1,j,tang}|^2|\E f_{\tau_2,j,tang}|^2\\
&\le&\sum_{T_1,T_1',T_2,T_2'\in\T_{j,tang,Q}}\int  \,\E f_{\tau_1,T_1}
\E f_{\tau_2,T_2}\overline{\E f_{\tau_1',T_1'}\E f_{\tau_2',T_2'}}+{\rm neglig}\nonumber \\
&=&\sum_{T_1,T_1',T_2,T_2'\in\T_{j,tang,Q}} (f_{\tau_1,T_1}\,d\sigma_\gamma
*f_{\tau_2,T_2}\,d\sigma_\gamma) \overline{(f_{\tau_1',T_1'}\,d\sigma_\gamma *f_{\tau_2',T_2'}\,d\sigma_\gamma)}+{\rm neglig}.\label{quadrup}
\end{eqnarray}
Here $\sigma_\gamma$ denotes the surface carried measure on ${\bf S_\gamma}$  chosen so that $\E f=\widehat{\tilde f d\sigma_\gamma},$ if  we set $\tilde f(z,\phi_\gamma(z)):=f(z).$

\smallskip

For each tube $T,$ we denote by $\theta(T)$ the cap $\theta$ so that $T\in\T(\theta),$ and let $\omega(T)$ be the center of $\theta(T).$  By $\tilde \omega(T):=(\omega(T),\phi_\gamma(\omega(T))$ we denote the corresponding point on ${\bf S_\gamma}.$

A given term in the first sum is not negligible only if there are four points
$z_1,\,z_1'\in \tau_1,$ $z_2,\,z_2'\in\tau_2$ that satisfy
\begin{equation}\label{sumcond1}
(z_1,\phi_\gamma(z_1))+(z_2,\phi_\gamma(z_2))=(z_1',\phi_\gamma(z_1'))+(z_2',\phi_\gamma(z_2'))
\end{equation}
and
\begin{equation}\label{sumcond2}
(z_i,\phi_\gamma(z_i))=\tilde \omega(T_i)+O(R^{-1/2+\delta}),\quad
(z_i',\phi_\gamma(z_i'))=\tilde \omega(T_i')+O(R^{-1/2+\delta}),\qquad i=1,2.
\end{equation}

Let us denote by ${\bf S}_i$ the   piece of the surface ${\bf S_\gamma}$ corresponding to $\tau_i, \, i=1,2$  (which are ``genuine'' caps). Since the caps $\tau_1$ and $\tau_2$ are strongly separated, by Lemma \ref{anglesep} these two  subsurfaces are transversal, so that we  can locally define the {\it intersection curve}
$$
\Pi_{z_1,z_2'}:=[{\bf S}_1+(z_2',\phi(z_2'))]\cap[{\bf S}_2+(z_1,\phi(z_1))].
$$
Note that by \eqref{sumcond1}
$$
(z_1,\phi_\gamma(z_1))+(z_2,\phi_\gamma(z_2))=(z_1',\phi_\gamma(z_1'))+
(z_2',\phi_\gamma(z_2'))\in \Pi_{z_1,z_2'}.
$$
Set $\psi(z):= \phi_\gamma(z-z_1)+\phi_\gamma(z_1)-\phi_\gamma(z-z_2')-\phi_\gamma(z_2').$ Then, the orthogonal
projection of the curve $\Pi_{z_1,z_2'}$ on the $z$ - plane  is the  curve  given by
$\{z:\;\psi(z)=0\}$ (just consider $z:=z_1+z_2=z'_1+z'_2$ for $z$ when \eqref{sumcond1} is satisfied).

We introduce a parametrization by arc length $z(t),\, t\in  J,$  of this curve, where
$t$ is from an  open interval $ J.$
Notice that this curve $z(t)$ depends on the choices of  the points $z_1$ and $z_2'.$
By
$$
z'_1(t):=z(t)-z'_2\quad \text{ and } \quad z_2(t):=z(t)-z_1
$$
we denote the corresponding curves on ${\bf S}_1$ and ${\bf S}_2,$ respectively.
We may assume that $0\in J$ and $z'_1(0)=z'_1, z_2(0)=z_2.$ Then, for $z_1,z'_2$ fixed, the pairs $(z'_1(t), z_2(t)), t\in J,$ locally provide all solutions
$(z'_1,z_2)$ to \eqref{sumcond1}.

Note that  $\psi(z(t))\equiv 0$ implies that

\begin{equation}\label{purp1}
\langle \nabla\phi_\gamma(z_2(t))-\nabla\phi_\gamma(z'_1(t)), \frac {dz}{dt}(t) \rangle=0 \quad \text{for every} \quad t\in J.
\end{equation}
Note also that $(z'_1,z_2):=(z_1,z'_2)$ is a solution of  \eqref{sumcond1}, so that we may assume that there is some $t_2\in J$ such that $z'_2=z_2(t_2).$ Recall also that $z_2(0)=z_2.$
\medskip

Recall the normal $N_\gamma(x,y)$  to the  surface ${\bf S_\gamma}$  at the point  $(x,y,\phi_\gamma(x,y))\in {\bf S_\gamma}$ from \eqref{Normal},
and note that  the angle between the tube $T_i$ and $N_\gamma(z_i),\, i=1,2,$ is bounded by $R^{-1/2}.$
\smallskip

Since $T_1,T_2,T_1',T_2'$ lie in $\T_{j,tang,Q},$ we then obviously have
\begin{eqnarray}\nonumber
R^{-1/2+2\delta}&\ge &|\det(N_\gamma(z_1),N_\gamma(z_2),N_\gamma(z_2'))|=|\det(N_\gamma(z_1),N_\gamma(z_2),N_\gamma(z_2')-N_\gamma(z_2))|\\
&=&\Big|\int_{0}^{t_2}
\det(N(z_1),N(z_2),\frac{d N_\gamma(z_2(t))}{dt})\,dt\Big|. \label{estdet}
\end{eqnarray}
 For a given $t,$
\begin{eqnarray*}
&&\hskip-0.5cm\det\Big(N_\gamma(z_1),N_\gamma(z_2),\frac{d N_\gamma(z_2(t))}{dt}\Big)=\det \left(\begin{array}{ccc}
      {\,}^t\nabla\phi_\gamma( z_1) & {\,}^t\nabla\phi_\gamma( z_2)& H\phi_\gamma( z_2(t))\cdot \trans (\frac {dz}{dt}(t))  \\
    -1 & -1 & 0
\end{array}\right)\\
&=&\det \left({\,}^t\nabla\phi_\gamma( z_1)-{\,}^t\nabla\phi_\gamma( z_2), H\phi_\gamma( z_2(t))\cdot \trans (\frac {dz}{dt}(t)) \right)\\
&=&\det H\phi_\gamma( z_2(t))\, \det \left(H\phi_\gamma( z_2(t))^{-1}\cdot  \left(
{\,}^t\nabla\phi_\gamma( z_1)-{\,}^t\nabla\phi_\gamma( z_2)\right), \trans (\frac {dz}{dt}(t)) \right).
\end{eqnarray*}

Since $|\frac {dz}{dt}(t))|=1$ and $\det H\phi_\gamma( z(t))=1,$ in combination with \eqref{purp1} we thus see that
\begin{eqnarray*}
&&\hskip-1cm\Big|\det\big(N_\gamma(z_1),N_\gamma(z_2),\frac{d N_\gamma(z_2(t))}{dt}\big)\Big|\\
&=&\frac{\big|\langle H\phi_\gamma ( z_2(t))^{-1}\left(
{\,}^t\nabla\phi( z_1)-{\,}^t\nabla\phi(
z_2)\right), \nabla\phi_\gamma(z_2(t))-\nabla\phi_\gamma(z'_1(t))\rangle\big|}
{| \nabla\phi_\gamma(z_2(t))-\nabla\phi_\gamma(z'_1(t))|}\\
&=&\frac{\big| \Gamma^\gamma_{z_2(t)}(z_1,z_2,z'_1(t), z_2(t))\big|}{ |\nabla\phi_\gamma(z_2(t))-\nabla\phi_\gamma(z'_1(t))|}.
\end{eqnarray*}

Note that here $ |\nabla\phi_\gamma(z_2(t))-\nabla\phi_\gamma(z'_1(t))|\le 4.$ Moreover, by our assumptions and Remark \ref{Gammasize}, we have 
$|\Gamma^\gamma_{z_2(t)}(z_1,z_2,z'_1(t),z_2(t))|\ge 4 \mu K^{-2}.$ 

Therefore
$$
\Big|\det\big(N_\gamma(z_1),N_\gamma(z_2),\frac{d N_\gamma(z_2(t))}{dt}\big)\Big|\ge \mu K^{-2},
$$
and   since the integrand in  \eqref{estdet} has constant sign, we see that
$$
R^{-1/2+2\delta}\ge \big|\int_{0}^{t_2}\mu K^{-2}\, dt\big|.
$$
Hence,
$|t_2|\le K^2 R^{-1/2+2\delta}$ and, since the curve $t\mapsto z_2(t)$ is parametrized by arc length, we find that
$|z_2-z_2'|\lesssim  K^2 R^{-1/2+2\delta}.$ Since  $z_1-z'_1=z'_2-z_2$ by \eqref{sumcond1}, we also get  $|z_1-z_1'|\lesssim  K^2 R^{-1/2+2\delta}.$ 

In a similar way, we see that  $|z_1-z_1'|\le  K^2 R^{-1/2+2\delta}.$

 Hence, given $T_1$ and $T_2,$ there are at most $R^{O(\delta)}$ possible tubes $T_1',T_2'$  which give a non-negligible contribution to \eqref{quadrup}, and
by Schur's lemma this implies that
$$
\int_Q|\E f_{\tau_1,j,tang}|^2|\E f_{\tau_2,j,tang}|^2
\le  R^{O(\delta)} \sum_{T_1,T_2\in\T_{j,tang,Q}}\int |f_{\tau_1,T_1}\,d\sigma_\gamma * f_{\tau_2,T_2}\,d\sigma_\gamma|^2
+{\rm neglig}.
$$

Finally, note that Lemma \ref{anglesep} implies that  $T_1\cap T_2$ is contained in a cube of side length $KR^{1/2+\delta}.$   Hence, the same reasoning used to prove inequality (38) in \cite{Gu16} leads to
$$
\int |f_{\tau_1,T_1}\,d\sigma_\gamma * f_{\tau_2,T_2}\,d\sigma_\gamma|^2\le R^{-1/2}\|f_{\tau_1,T_1}\|_2^2\, \|f_{\tau_2,T_2}\|_2^2,
$$
and combining these two estimates we complete the   proof of Lemma \ref{lemma4.11}.

\qed

\color{black}
\subsection{Completing the  proof of Theorem \ref{largetheorem}}

Following \cite{Gu16}, pp. 396--398, we use induction on the size of  $R,$ the radius of $B_R.$  Moreover, for  given $R,$ we also induct on the size of $\sum_\tau\int |f_\tau|^2.$
Here we understand that a positive quantity is {\it of size} $2^k,\, k\in\Z,$ if it lies in the interval $(2^{k-1}, 2^{k}].$
\smallskip

\noindent\bf Bases of induction. a) \rm
We recall from Remark \ref{rem4.1} b) that for $1\le R\le1000 \, e^{e^{\epsilon^{-12}}}$
$$
\int_{B_R}|\E f|^{3.25}\le C_4(\epsilon) (\sum\limits_\tau\|f_\tau\|^2_2)^{3/2+\epsilon},
$$
so that the estimate in Theorem \ref{largetheorem} holds true for this range of $R$'s.
\medskip

{\bf b)} Also, if  $\sum_\tau\int |f_\tau|^2\le R^{-1000},$ then the estimate in Theorem \ref{largetheorem} holds trivially, since
$$
\int_{B_R}|\E f|^{3.25}\le R^3\|f\|_1^{3.25}\le R^3\|f\|_2^{3.25}\le
R^{-100}\|f\|_2^{3+2\epsilon}\le K(\epsilon)^2
R^{-100}\big(\sum_\tau\int|f_\tau|^2\big)^{3/2+\epsilon}.
$$
In the induction procedure, it will thus suffice to show that in each step we can reduce to  situations where either $R,$ or
$\sum_\tau\int |f_\tau|^2,$ becomes smaller  by a factor  $\le 1/2,$  until we go below one of the thresholds described in a), or b).

\medskip
We shall show that inequality \eqref{broadest} of Theorem \ref{largetheorem} will then hold with the constant   $C_\epsilon:=\max\{K(\epsilon)^2, C_4(\epsilon)\}.$

\medskip

\noindent\bf Induction hypotheses. \rm Assume that Theorem \ref{largetheorem} holds for all radii $\le R/2,$ or, given $R,$ for all functions $g$ in place of $f$ such that
$\sum_\tau\int|g_\tau|^2\le\frac12\sum_\tau\int|f_\tau|^2$ and every $\mu\ge 1.$

\medskip

Write
\begin{equation}\label{cell-wall}
\int_{B_R} (Br_\alpha\E f)^{3.25}=\sum_i\int_{B_R\cap O_i'} (Br_\alpha\E
f)^{3.25}+\int_{B_R\cap W} (Br_\alpha\E f)^{3.25}.
\end{equation}

\noindent{\bf  Case 1.  Assume that the first term (cellular term) dominates \eqref{cell-wall}.}
In this case, by \eqref{D} there will be $\sim D^3$ cells $O_i',$ and for each of them
$$
\int_{B_R\cap O_i'} Br_\alpha\E f^{3.25}\sim D^{-3}\int_{B_R} Br_\alpha\E f^{3.25}.
$$
In combination with  Lemma \ref{lemma3.7}, then, for every $i,$
\begin{eqnarray}\nonumber
\int_{B_R} (Br_\alpha\E f)^{3.25}&\sim& D^3 \int_{B_R\cap O_i'} (Br_\alpha\E f)^{3.25}\\
&\lesssim& D^3
\int_{B_R\cap O_i'}
(Br_{2\alpha}\E f_i)^{3.25}+R^{-900}(\sum_\tau\|f_\tau\|_2)^{3.25}. \label{D3}
\end{eqnarray}
If the second term in \eqref{D3} dominates, then, since $\alpha\ge K^\epsilon,$
$R^{\delta_{trans}\log(K^\epsilon\alpha\mu)}\ge R^{\delta_{trans}}\ge 1,$ and that finishes
the proof.

If the first term in \eqref{D3} dominates, we use Lemma \ref{lemma3.2} and the following immediate analogue to Lemma 2.7 in \cite{Gu16}:
\begin{lemnr}
Consider some subsets $\T_i\subset \T$ indexed by $i\in \mathcal I.$ If each tube $T$ belongs to at
most $\kappa$ of the
subsets $\{\T_i\}_{i\in \mathcal I},$ then, for every $\theta,$
$$
\sum_{i\in
\mathcal I}\int_{3\theta}|f_{\tau,i}|^2\le\kappa\int_{10\theta}|f_\tau|^2,
$$
and
$$
\sum_{i\in \mathcal I}\int|f_{\tau,i}|^2\le\kappa\int |f_\tau|^2.
$$
\end{lemnr}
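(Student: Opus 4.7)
The plan is to derive both inequalities from the essential orthogonality property (d) together with the almost-disjointness of the supports of wave packets attached to different caps $\theta$, using the tube-multiplicity hypothesis only at one clean place.

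First I would unpack $f_{\tau,i}=\sum_{T\in\T_i}f_{\tau,T}$ and exploit property (a) of Proposition~\ref{packets}: for $T\in\T(\theta)$, $\supp f_{\tau,T}\subset 3\theta$. Hence when we restrict to $3\theta$, only those tubes lying in $\T(\theta')$ with $3\theta'\cap 3\theta\ne\emptyset$ contribute. There is a bounded number of such "adjacent" caps $\theta'\in\mathcal A(\theta)$, and any such $3\theta'$ is contained in, say, $10\theta$. Thus by Cauchy--Schwarz in the finite index set $\mathcal A(\theta)$,
\begin{equation*}
\int_{3\theta}|f_{\tau,i}|^2 \;\lesssim\; \sum_{\theta'\in\mathcal A(\theta)} \int\Bigl|\sum_{T\in\T_i\cap\T(\theta')}f_{\tau,T}\Bigr|^2.
\end{equation*}

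Next I would invoke the essential orthogonality property (d): expanding the inner square, the off-diagonal contributions from disjoint $T,T'\in\T(\theta')$ are each bounded by $R^{-1000}\int_{3\theta'}|f_\tau|^2$, and there are at most $R^{O(1)}$ pairs, so their total is negligible and can be absorbed. This yields
\begin{equation*}
\int\Bigl|\sum_{T\in\T_i\cap\T(\theta')}f_{\tau,T}\Bigr|^2 \;\lesssim\; \sum_{T\in\T_i\cap\T(\theta')}\int|f_{\tau,T}|^2 \;+\; \mathrm{neglig}.
\end{equation*}
Now summing in $i\in\mathcal I$, the tube-multiplicity hypothesis enters: each $T\in\T(\theta')$ appears in at most $\kappa$ of the $\T_i$, so
\begin{equation*}
\sum_{i\in\mathcal I}\sum_{T\in\T_i\cap\T(\theta')}\int|f_{\tau,T}|^2 \;\le\; \kappa\sum_{T\in\T(\theta')}\int|f_{\tau,T}|^2,
\end{equation*}
and property (e) bounds the right-hand side by $C\kappa\int_{3\theta'}|f_\tau|^2\le C\kappa\int_{10\theta}|f_\tau|^2$. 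Summing over the bounded set $\mathcal A(\theta)$ gives the first claim (the implicit constant being absorbed into the statement).

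The second inequality is a simplified version of the same argument without the local cutoff: I would use the fact that $3\theta$ and $3\theta'$ are essentially disjoint for non-adjacent $\theta,\theta'$ to write $\int|f_{\tau,i}|^2\lesssim\sum_\theta\int|\sum_{T\in\T_i\cap\T(\theta)}f_{\tau,T}|^2$, apply (d) inside each $\T(\theta)$, sum in $i$ (picking up the factor $\kappa$ from the multiplicity), and finally apply (e) and the finite overlap of $\{3\theta\}_\theta$. The only point requiring care is keeping the negligible error terms genuinely negligible; because there are only $\mathrm{Poly}(R)$ tubes in total and $\#\mathcal I\le\mathrm{Poly}(R)$, the $R^{-1000}$ factor in (d) easily dominates all the combinatorial factors, so there is no real obstacle.
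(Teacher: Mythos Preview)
Your argument is correct and is exactly the standard route: this lemma is not proved in the paper at all---it is simply stated as ``the following immediate analogue to Lemma~2.7 in [Gu16]''---and your sketch reproduces Guth's proof of that lemma, using properties (a), (d), (e) of the wave packet decomposition together with the multiplicity hypothesis. One small remark: you justify control of the off-diagonal error terms by appealing to $\#\mathcal I\le\mathrm{Poly}(R)$, which is not part of the hypothesis; it is cleaner (and avoids any extra assumption) to observe that for each fixed pair $T\ne T'$ in $\T(\theta')$ the number of indices $i$ with $T,T'\in\T_i$ is itself at most $\kappa$, so the off-diagonal sum is bounded by $\kappa\cdot(\#\T(\theta'))^2\cdot R^{-1000}\int_{3\theta'}|f_\tau|^2$, which is negligible against the main term regardless of the size of $\mathcal I$.
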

Applying this lemma in combination with Lemma \ref{lemma3.2},  we see that for each $\tau,$
$$
\sum_{i}\int|f_{\tau,i}|^2\le (D+1) \int|f_\tau|^2,
$$
and therefore
$$
\sum_{i}\sum_\tau\int|f_{\tau,i}|^2\le (D+1) \sum_\tau\int|f_\tau|^2.
$$
Now, recall that there are $\sim D^{3}$ indices $i.$  Thus we can choose and fix an index $i_0$ such
that
\begin{equation}\label{1/2}
\sum_\tau\int|f_{\tau,i_0}|^2\lesssim  D\,D^{-3} \sum_\tau\int|f_\tau|^2=D^{-2}
\sum_\tau\int|f_\tau|^2\ll \frac12\sum_\tau\int|f_\tau|^2.
\end{equation}
We finish this case by applying  the induction hypothesis (on the size of
$\sum_\tau\int|g_\tau|^2$) to the function
$f_{i_0}:=\sum_\tau f_{\tau,i_0}.$
Note that the support of $f_{\tau,i_0}$ is a tiny neighborhood of $\tau.$ For this reason we
need $\mu$ in the
statement of Theorem \ref{largetheorem}, so that here we can  apply the induction hypothesis with $2\mu$
in place of $\mu.$

To this end, note also that
$$
\oint_{B(\omega,R^{-1/2})} |f_{\tau,i_0}|^2\le C\oint _{B(\omega,10R^{-1/2})}
|f_{\tau}|^2\le C,
$$
where the first inequality is a consequence of the following   immediate analogue to Lemma 2.8 in \cite{Gu16}:
\begin{lemnr}\label{lemma2.8}
If $\T_i\subset \T,$ then for any cap $\theta,$ and any $\tau,$
$$
\int_{3\theta}|f_{\tau,i}|^2\le\int_{10\theta}|f_\tau|^2.
$$
\end{lemnr}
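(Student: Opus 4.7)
My plan is to follow the standard wave-packet template, exploiting the support and orthogonality properties (a), (d), (e) of Proposition \ref{packets} applied to $f_\tau$ in place of $f$.

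First I would regroup the tubes in $\T_i$ according to which cap $\theta'$ they are associated with, writing
$$
f_{\tau,i} = \sum_{\theta'} g_{\theta'}, \qquad g_{\theta'} := \sum_{T \in \T_i \cap \T(\theta')} f_{\tau,T}.
$$
By property (a), each $g_{\theta'}$ is supported in $3\theta'$. Since both $\theta$ and the $\theta'$ have sidelength $R^{-1/2}$, only those $\theta'$ with $3\theta' \cap 3\theta \ne \emptyset$ contribute to $\int_{3\theta}|f_{\tau,i}|^2$, and every such $\theta'$ is comfortably contained in $10\theta$.

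Using the bounded overlap of $\{3\theta'\}_{\theta'}$ (each point of $\Sigma$ lies in $O(1)$ of them), I would then estimate
$$
\int_{3\theta}|f_{\tau,i}|^2 \;\le\; \bigg\| \sum_{\theta':\, 3\theta' \cap 3\theta \ne \emptyset} g_{\theta'} \bigg\|_2^2 \;\lesssim\; \sum_{\theta' \subset 10\theta} \|g_{\theta'}\|_2^2 .
$$
For each fixed $\theta'$, the essential orthogonality (d) within $\T(\theta')$ reduces $\|g_{\theta'}\|_2^2$ to $\sum_{T \in \T_i \cap \T(\theta')}\|f_{\tau,T}\|_2^2$ plus an $R^{-1000}$-sized error (arising from the at most $R^{O(1)}$ many cross-terms, each controlled by (d)). Enlarging the index set to the full $\T(\theta')$ and invoking (e), one obtains $\|g_{\theta'}\|_2^2 \le C \int_{3\theta'}|f_\tau|^2$. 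Summing over $\theta' \subset 10\theta$ and using bounded overlap once more yields $\sum_{\theta'} \int_{3\theta'}|f_\tau|^2 \lesssim \int_{10\theta}|f_\tau|^2$, which is the claimed bound; the implicit absolute constants are absorbed into the normalization of Proposition \ref{packets}.

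The step I expect to be most delicate is the passage from $\|g_{\theta'}\|_2^2$ to $\sum_T \|f_{\tau,T}\|_2^2$. The tubes in $\T(\theta')$ are $R^{1/2}$-separated but have radius $R^{1/2+\delta}$, so at each point $O(R^{2\delta})$ of them overlap, and naively Cauchy--Schwarz would cost an $R^{2\delta}$-factor. One therefore has to argue that (d), applied with the very strong gain $R^{-1000}$, is enough to absorb all the non-disjoint cross-term contributions; this is precisely the wave-packet argument used in the proof of Lemma 2.8 in \cite{Gu16}, and it transfers verbatim to our setting since Proposition \ref{packets} holds uniformly in $\gamma \in [-1,1]$ by Remark \ref{rem4.3}.
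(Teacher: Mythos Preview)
Your approach is correct and is exactly what the paper has in mind: the paper gives no proof at all, simply calling the lemma an ``immediate analogue to Lemma 2.8 in \cite{Gu16}'', and your sketch is the standard wave-packet argument from that reference, transferred verbatim via Proposition~\ref{packets}.

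One small clarification on the point you flag as delicate. Property (d) as recorded in Proposition~\ref{packets} is stated only for \emph{disjoint} tubes $T_1,T_2\in\T(\theta')$, so invoking it to ``absorb all the non-disjoint cross-term contributions'' is not quite what happens. In Guth's construction the functions $f_{\tau,T}$ for $T\in\T(\theta')$ arise from a Fourier-series type expansion of $f_\tau$ on the cap and are therefore essentially orthogonal for \emph{all} distinct pairs $T\ne T'$ in $\T(\theta')$, not merely the disjoint ones; this is the property that eliminates the $R^{2\delta}$ loss you were worried about and yields $\|g_{\theta'}\|_2^2\lesssim\sum_{T}\|f_{\tau,T}\|_2^2$ directly. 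With that correction your outline goes through and matches the paper's intended argument.
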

We then apply our induction hypothesis to $\frac1{\sqrt{2C}}f_{i_0}=\frac1{\sqrt{2C}}\sum_\tau f_{\tau,i_0}.$ Since we assume that the first term in  \eqref{D3} dominates, this yields
\begin{eqnarray*}
\int_{B_R} (Br_\alpha\E f)^{3.25}
&\lesssim&D^3 \int_{B_R\cap O'_{i_0}} (Br_{2\alpha}\E f_{i_0})^{3.25} \\
&\le&  (2C)^{1/8-\epsilon}D^3C_\epsilon R^\epsilon
\bigg(\sum_\tau\int|f_{\tau,{i_0}}|^2\bigg)^{3/2+\epsilon}R^{\delta_{trans}\log(K^\epsilon2\alpha2\mu)},
\end{eqnarray*}
and  thus by \eqref{1/2}
\begin{eqnarray*}
\int_{B_R} (Br_\alpha\E f)^{3.25}&\le& C_1 D^3C_\epsilon R^\epsilon
\bigg(D^{-2}\sum_\tau\int|f_{\tau}|^2\bigg)^{3/2+\epsilon}R^{\delta_{trans}\log(K^\epsilon\alpha\mu)}
R^{c\delta_{trans}}\\
&=&C_1 C_\epsilon R^\epsilon
\bigg(\sum_\tau\int|f_{\tau}|^2\bigg)^{3/2+\epsilon}R^{\delta_{trans}\log(K^\epsilon\alpha\mu)}
D^{-2\epsilon}R^{c\delta_{trans}}\\
&\le& C_\epsilon R^\epsilon
\bigg(\sum_\tau\int|f_{\tau}|^2\bigg)^{3/2+\epsilon}R^{\delta_{trans}\log(K^\epsilon\alpha\mu)},
\end{eqnarray*}
closing the induction.

\medskip

\noindent{\bf  Case 2. Assume that the  second term (wall term) dominates \eqref{cell-wall}.}
In this case we apply Lemma \ref{lemma3.8} to obtain
\begin{eqnarray}\nonumber
\int_{B_R} (Br_\alpha\E f)^{3.25}&\le& C_\epsilon \sum_{j}\int_{B_j\cap W}\sum_ I (Br_{150\alpha}\E
f_{I,j,trans})^{3.25}\\
&+& C K^{325}\sum_j\int_{B_j\cap W} {\rm Bil}(\E f_{j,tang})^{3.25}+C \left(R^{-900}\sum_\tau\|f_\tau\|_2\right)^{3.25}  \label{three}
\end{eqnarray}
(note that the number of all possible subsets I of the given family of caps is only a constant depending on $\epsilon$).

Again, if the third term of this last sum dominates, the proof is easily finished.
\smallskip

If the second term dominates, then by Proposition \ref{prop3.9}, since $K\ll R,$
\begin{eqnarray*}
\int_{B_R} Br_\alpha\E f^{3.25}&\lesssim & C_\epsilon K^{325}\sum_j\int_{B_j\cap W} {\rm Bil}(\E f_{j,tang})^{3.25}\\
&\le& C_\epsilon K^{325}R^{O(\delta)+\epsilon/2}\bigg(\sum_\tau\int|f_\tau|^2\bigg)^{3/2+\epsilon} \le
CR^{\epsilon}\bigg(\sum_\tau\int|f_\tau|^2\bigg)^{3/2+\epsilon}.
\end{eqnarray*}

This finishes the proof in this case.
\smallskip

Finally, assume that  the first term in \eqref{three} dominates.  Then, since the ball $B_j$ has radius
$R^{1-\delta}<\frac R2,$ we shall induct on the size of $R.$
Note also that $f_{\tau,j,trans,I}$ is supported in a tiny neighborhood of $\tau,$ so we shall again   apply the induction hypothesis with $2\mu$
in place of $\mu.$

 By Lemma \ref{lemma2.8},
$$
\oint_{B(\omega, R^{-1/2})}|f_{I,j,trans,\tau}|^2\le\oint_{B(\omega,R^{-1/2})}|f_\tau|^2\le C,
$$
which implies the same kind of control over larger balls of radius $(R^{1-\delta})^{-1/2}.$
Thus, $\frac1C f_{I,j,trans}$ satisfies the induction hypothesis of Theorem
\ref{largetheorem},  and therefore
$$
\int_{B_j\cap W}(Br_{150\alpha}\E f_{I,j,trans})^{3.25} \leq C_\epsilon R^{\epsilon(1-\delta)}
\bigg(\sum_{\tau\in I}\int|f_{\tau,j,trans}|^2\bigg)^{3/2+\epsilon}R^{\delta_{trans}(1-\delta)\log(4K^\epsilon\alpha\mu)}.
$$
By Lemma \ref{lemma3.5},
$$
\sum_{j}\int|f_{\tau,j,trans}|^2\le {\rm Poly}(D)\int |f_\tau|^2.
$$
Moreover,
$$
\sum_{I,j}\bigg(\sum_{\tau\in I} \int|f_{\tau,j,trans}|^2\bigg)^{3/2+\epsilon}
\le \sum_I\bigg(\sum_j\sum_{\tau\in I} \int|f_{\tau,j,trans}|^2\bigg)^{3/2+\epsilon}.
$$
Since  there are at most $M_\epsilon$ families $I,$ combining these estimates we see that
$$
\sum_{I,j}\bigg(\sum_{\tau\in I} \int|f_{\tau,j,trans}|^2\bigg)^{3/2+\epsilon}\le M_\epsilon {\rm Poly}(D)\bigg(\sum_\tau\int|f_\tau|^2\bigg)^{3/2+\epsilon},
$$
and thus finally
\begin{eqnarray*}
\int_{B_R}(Br_{2\alpha}\E f)^{3.25} &\le&M_\epsilon C_\epsilon {\rm Poly}(D) R^{\epsilon(1-\delta)}
\bigg(\sum_\tau\int |f_\tau|^2\bigg)^{3/2+\epsilon}R^{\delta_{trans}(1-\delta)\log(4K^\epsilon\alpha\mu)}\\
&\le&(M_\epsilon {\rm Poly}(D) R^{-\delta\epsilon+c\delta_{trans}})\,  C_\epsilon R^{\epsilon}
\bigg(\sum_\tau\int|f_\tau|^2\bigg)^{3/2+\epsilon}
R^{\delta_{trans}\log(K^\epsilon\alpha\mu)}.
\end{eqnarray*}
 By our choices of $\delta$ and $\delta_{trans},$ since we assume that $R$ is sufficiently large, we find that the first factor in parentheses  is bounded by $1,$ and thus
$$
\int_{B_R}(Br_{2\alpha}\E f)^{3.25} \le
C_\epsilon R^\epsilon\bigg(\sum_\tau\int|f_\tau|^2\bigg)^{3/2+\epsilon}
R^{\delta_{trans}\log(K^\epsilon\alpha\mu)}.
$$
This closes the induction and thus completes the proof of Theorem \ref{largetheorem}.

\medskip

\subsection{Proof of the Geometric Lemma}\label{proofgeom}
In this subsection we prove Lemma \ref{geometric}.
Assume that we are given a  family of caps $\{\tau_k\}$ such that for any $k,m$ with  $k\ne m$ we have
\begin{equation}\label{notstrong}
\min\{|y^c_m-y^c_k|, \max\{|{\bf t}^\gamma_{z^c_m}(z^c_m,z^c_k)|,\,|{\bf t}^\gamma_{z^c_k}(z^c_m,z^c_k)|\}\}\le 10 \mu^{1/2}K^{-1},
\end{equation}
where we denote by $z^c_k=(x^c_k,y^c_k)$ the center of the cap $\tau_k.$
\smallskip

\noindent\bf Case 1\rm. For all $k,m$ we have $|y^c_m-y^c_k|\le {10\mu^{1/2}}K^{-1}.$ Then, all caps are
contained in a
horizontal strip of width ${10\mu^{1/2}}K^{-1}\le \mu^{1/2}K^{-1/4},$ since $K\ge 30.$
\smallskip

\noindent\bf Case 2\rm. There are two caps, say  $\tau_1,\,\tau_2,$ such that $|y^c_1-y^c_2|>{10\mu^{1/2}}K^{-1}.$
We may assume that
$y^c_2-y^c_1=\max_{j\ne k}|y^c_j-y^c_k|.$
Then, for all $k,$
$$y^c_1\le y^c_k\le y^c_2.$$
Since $\tau_1$ and $\tau_2$ are not strongly separated, 
$|{\bf t}_{z^c_1}^\gamma(z^c_1,z^c_2)|\le {10\mu^{1/2}}K^{-1}$ and 
$|{\bf t}_{z^c_2}^\gamma(z^c_1,z^c_2)|\le{10\mu^{1/2}}K^{-1}.$ Therefore, by \eqref{TV3}, 

$$
|\gamma |\,|y^c_2-y^c_1|^2=| {\bf t}_{z^c_1}^\gamma(z^c_1,z^c_2)-{\bf t}_{z^c_2}^\gamma(z^c_1,z^c_2)|
\le 20\mu^{1/2}K^{-1},
$$
and since $|y^c_k-y^c_1|\le  |y^c_2-y^c_1|,$ we see that   for all $k$ 
\begin{equation}\label{dist1}
 |y^c_k-y^c_1|\le (20 \mu^{1/2}|\gamma|^{-1}K^{-1})^{\frac 12}.
\end{equation}
In combination with  \eqref{TV3}, this also  implies that 
\begin{eqnarray}\nonumber
|x^c_k-x^c_1|&\le& |{\bf t}_{z^c_1}^\gamma(z^c_1,z^c_k)|+|\gamma| |y^c_k|\, |y^c_k-y^c_1|\\ \nonumber
&\le& {10\mu^{1/2}}K^{-1}+|\gamma|(20\mu^{1/2}|\gamma|^{-1}K^{-1})^{\frac 12}\\ 
 &\le& 15\mu^{1/2}K^{-1/2},  \label{dist2}
\end{eqnarray}
since $|\gamma|\le1,$ $\mu\ge1$ and $K\ge1.$
\smallskip

When $|\gamma| K^{1/2} >1,$ we conclude from \eqref{dist1} that $|y^c_k-y^c_1|\le (20 \mu^{1/2}K^{-1/2})^{1/2}$  for all $k.$ Hence, all the caps are contained in a horizontal 
strip of width $10 \mu^{1/2} K^{-1/4}. $
Decomposing these further into horizontal strips of width $0.5\mu^{1/2}K^{-1/4},$  each of which is contained in one of the ragged strips $S_\ell$ that have been fixed in Section \ref{reduction}, and distributing the caps $\tau_k$ of our family  over these ragged strips, we arrive at at most 40 horizontal ragged strips of width  $\mu^{1/2}K^{-1/4}$ which contain all the  caps considered in Case 2. Note that by our passage to ragged strips the width does not increase by more than  $2\mu^{1/2} K^{-1}\le 0.5\mu^{1/2}K^{-1/4},$ since $K\ge 20.$

\smallskip

When $|\gamma| K^{1/2} \le 1,$ we conclude from \eqref{dist2} that all the caps are contained in a vertical  strip of width $20\mu^{1/2}K^{-1/2},$  and arguing as before we can conclude the proof of Lemma \ref{geometric} also in this case. 

\qed

\section{Passing from extension estimates on cubes to estimates on plates: an othogonality lemma}
We will here finally discuss an auxilary lemma that we needed in Section \ref{broadpoints}.\\
Let $\Omega\subset\R^n$ be an open bounded set and  $\phi:\Omega\to\R$ any phase function such that $|\nabla\phi(x)|\lesssim 1$ for all $x\in\Omega$. Assume further that  $\rho\in C^\infty(\Omega),$ and consider the Fourier extension operator
$$
\mathcal{E} f(\xi):=\int_\Omega f(x)\, e^{-i[\xi'\cdot x +\xi_{n+1} \phi(x)]} \rho(x)\, dx,
$$
where $\xi=(\xi',\xi_{n+1})\in\R^n\times\R$ (for convenience, we have chosen here a different sign in the phase than in the definition of $\E f$).
\begin{lemnr}\label{aux}
 Let $2\leq q\leq p$. Assume that  for every  $\epsilon>0$ there exists a constant $C_\epsilon$ such that for every $R\geq1$
\begin{align}\label{eins}
	\|\mathcal{E} f\|_{L^p([0,R]^{n+1})} \leq C_\epsilon R^{\epsilon}
	\|f\|_{L^2(\Omega )}^{2/q}\, \|f\|_{L^\infty(\Omega)}^{1-2/q}
\end{align}
for all $f\in L^q(\Omega)$. Then for every  $\epsilon>0$ there exista a constant $C'_\epsilon$ such that for all $R\geq1$
\begin{align}\label{zwei}
	\|\mathcal{E} f\|_{L^p(\R^n\times[0,R])} \leq C'_\epsilon R^{\epsilon}
	\|f\|_{L^2(\Omega )}^{2/q}\, \|f\|_{L^\infty(\Omega)}^{1-2/q}
\end{align}
for all $f\in L^q(\Omega)$.
\end{lemnr}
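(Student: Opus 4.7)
My plan is to decompose the plate $P:=\R^n\times[0,R]$ into disjoint translates of the cube of side $R$, apply the hypothesis on each translate, and combine these estimates with an $L^2$--orthogonality coming from Plancherel.

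Set $Q_k:=Rk+[0,R]^n$ for $k\in\Z^n$, so that $P=\bigsqcup_{k}\big(Q_k\times[0,R]\big)$. A translation $\xi'\mapsto\xi'+Rk$ in the extension operator corresponds to the modulation $f\mapsto e^{-iRk\cdot x}f$, which does not change $\|f\|_2$ or $\|f\|_\infty$. Hence hypothesis \eqref{eins}, applied to each translated cube, provides the uniform bound
\[
\|\mathcal E f\|_{L^p(Q_k\times[0,R])}\le A:=C_\epsilon R^\epsilon\|f\|_2^{2/q}\|f\|_\infty^{1-2/q}.
\]
Since a direct summation of $p$-th powers diverges, I would supply the missing orthogonality through Plancherel. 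For each fixed $\xi_{n+1}$, the function $\xi'\mapsto\mathcal E f(\xi',\xi_{n+1})$ is the Fourier transform of $f(\cdot)\rho(\cdot)e^{-i\xi_{n+1}\phi(\cdot)}\mathbf 1_\Omega$, which is supported in the bounded set $\Omega$. Plancherel in $\xi'$ and integration over $\xi_{n+1}\in[0,R]$ give
\[
\sum_k\|\mathcal E f\|_{L^2(Q_k\times[0,R])}^2=\|\mathcal E f\|_{L^2(P)}^2\le CR\|f\|_2^2.
\]

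I would combine these two ingredients through a Chebyshev/layer--cake analysis. Via the hypothesis, on each cube one has the super-level set bound
$|\{|\mathcal E f|>\lambda\}\cap(Q_k\times[0,R])|\le A^p/\lambda^p$,
whereas via the $L^2$--orthogonality,
$|\{|\mathcal E f|>\lambda\}\cap P|\le CR\|f\|_2^2/\lambda^2$.
Splitting the cubes according to whether $\|\mathcal E f\|_{L^2(Q_k\times[0,R])}$ exceeds a threshold chosen to balance these two bounds at the level $\lambda$ gives a controlled number of ``large'' cubes (to which the hypothesis is applied) while the remaining ``small'' cubes are summed through the $L^2$-orthogonality. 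Integrating the resulting super-level-set bound via the layer-cake formula $\|\mathcal E f\|_{L^p(P)}^p=p\int_0^\infty\lambda^{p-1}|\{|\mathcal E f|>\lambda\}\cap P|\,d\lambda$ then yields the plate estimate.

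The main obstacle is guaranteeing that this balancing produces the sharp $R^\epsilon$-factor rather than the $R^{1/p}$-loss that arises from the trivial $L^\infty$--$L^2$ interpolation $\|\mathcal E f\|_{L^p(P)}^p\le\|\mathcal E f\|_{L^\infty}^{p-2}\|\mathcal E f\|_{L^2(P)}^2$ (with $\|\mathcal E f\|_\infty\le\|f\|_1\lesssim\|f\|_2^{2/q}\|f\|_\infty^{1-2/q}$ by interpolation on the bounded set $\Omega$). Closing this gap requires careful bookkeeping in the choice of threshold in the layer-cake step and crucially exploits that the hypothesis is valid for \emph{every} $\epsilon>0$, so any subpolynomial loss incurred in the balancing procedure can be absorbed by a mild readjustment of $\epsilon$, yielding the same form of estimate on $P$ as on the cube.
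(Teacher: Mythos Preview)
Your approach has a genuine gap: the balancing you sketch cannot close the $R^{1/p}$ loss. When you apply the hypothesis to $f$ on each cube $Q_k\times[0,R]$, you obtain the \emph{same} bound $A=C_\epsilon R^\epsilon\|f\|_2^{2/q}\|f\|_\infty^{1-2/q}$ for every $k$; nothing decays in $k$. Your only source of decay is the $L^2$--orthogonality $\sum_k b_k^2\le CR\|f\|_2^2$ with $b_k:=\|\mathcal{E}f\|_{L^2(Q_k\times[0,R])}$. If you split according to a threshold $T$ on $b_k$, then on the ``small'' cubes the only available $L^p$ control is the interpolation $a_k\le M^{1-2/p}b_k^{2/p}$ (with $M=\|\mathcal{E}f\|_\infty$), giving $\sum_{\text{small}}a_k^p\le M^{p-2}\sum b_k^2\le CR\|f\|_2^2 M^{p-2}$, independently of $T$; the ``large'' cubes contribute $\le (CR\|f\|_2^2/T^2)A^p$, which is minimized by sending $T\to\infty$. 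Either way you land exactly on the trivial $L^\infty$--$L^2$ interpolation with its $R^{1/p}$ loss, and the hypothesis never enters nontrivially. This loss is polynomial in $R$, so the ``hypothesis holds for every $\epsilon$'' clause cannot absorb it.

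The missing idea, and the point of the paper's proof, is to decompose \emph{the input} rather than the output. One writes $f=\sum_{y\in\Z^n} f_y$ with $f_y:=f*\check\chi_y$, where $\chi_y(\eta)=\chi(\eta/R-y)$ and the $\chi_y$ form a partition of unity. Since $f\rho$ is supported in the bounded set $\Omega$ and $|\nabla\phi|\lesssim 1$, integration by parts shows that for $0\le\xi_{n+1}\le R$ the piece $\mathcal{E}f_y$ is rapidly decaying outside the single cube $Q_y=R(y,0)+[0,R]^{n+1}$. One now applies the hypothesis to $f_y$ on $Q_y$, obtaining a bound in terms of $\|f_y\|_2^{2/q}\|f_y\|_\infty^{1-2/q}$; then $\|f_y\|_\infty\lesssim\|f\|_\infty$ (convolution with a fixed $L^1$ kernel) and, crucially, $\sum_y\|f_y\|_2^2\lesssim\|f\|_2^2$ by Plancherel. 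Because $p\ge q$, the resulting $\ell^p$ sum over $y$ is controlled by the $\ell^q$ sum, which in turn is controlled by the $\ell^2$ sum. This is where the orthogonality buys you a summable estimate, something your uniform bound $a_k\le A$ cannot provide.
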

\begin{proof}
First observe that \eqref{eins} holds for any translate of $[0,R]^{n+1}$ in place of $[0,R]^{n+1}$ as well, in particular on  any cube $Q_y:= R(y,0)+[0,R]^{n+1}, \, y\in\Z^n$.

In order to pass to a corresponding estimate on the plate $\R^n\times[0,R]),$ which decomposes into the cubes $Q_y,$ it will suffice to perform an adapted  frequency decomposition of $f$ (a full wave packet  decomposition is needed here):

Let $f_y := f \ast\check\chi_y$, where $\chi_y(\eta):=\chi(\eta/R-y)$, $y\in\Z^n,$ and $\chi$ is a  suitable compactly supported bump function chosen so that the  $\chi_y, y\in \Z^n,$ form a partition of unity on $\R^n.$ Then  $f=\sum_y f_y$.

In order to prove \eqref{zwei},  we may and shall assume that  $0\leq\xi_{n+1}\leq R$. Under this restriction, we will see that $\mathcal{E} f_y$ is essentially supported in $Q_y$. Indeed, note that by Fourier inversion
\begin{align*}
	\mathcal{E} f_y(\xi) =c_n \iint\hat f(\eta)\chi(\eta/R-y)e^{-i[(\xi'-\eta)\cdot x + \xi_{n+1} \phi(x)]} \rho(x)dxd\eta.
\end{align*}
The gradient in $x$ of the full phase is given by  $\xi'-\eta+\xi_{n+1}\nabla\phi(x)=\xi'-Ry+O(R)$.
Hence, by a standard integration by parts argument in $x,$  we see that for any $N\in\N$
\begin{align}\label{decay}
	|\mathcal{E} f_y(\xi)| \lesssim_N R^n\big|\xi'-Ry\big|^{-N} \|f_y\|_1, \qquad \text{if}\ \big|\xi'-Ry\big|\gg R.
\end{align}
It is thus natural to split (and estimate, using Minkowski's inequality)
\begin{align*}
	\|\mathcal{E} f\|_{L^p(\R^n\times[0,R])}
	=& \left(\sum_{y} \|\sum_{z}\mathcal{E} f_{y+z}\|_{L^p(Q_{y})}^p\right)^{1/p}
	\leq \sum_{z}\left(\sum_{y} \|\mathcal{E} f_{y}\|_{L^p(Q_{y-z})}^p\right)^{1/p}\\
\end{align*}
into two parts: First we use \eqref{eins} and $p\geq q$ to estimate
\begin{align*}
 \sum_{|z|\lesssim 1}\left(\sum_{y} \|\mathcal{E} f_{y}\|_{L^p(Q_{y-z})}^p\right)^{1/p}
			\lesssim &  C_\epsilon R^{\epsilon} \left(\sum_{y}
	\|f_y\|_{L^2(\Omega )}^{2p/q}\, \|f_y\|_{L^\infty(\Omega)}^{p(1-2/q)}\right)^{1/p}\\
	\leq& C_\epsilon R^{\epsilon} \left(\sum_{y} \|f_y\|_{L^2(\Omega )}^{2}\right)^{1/q} \|f\|_{L^\infty(\Omega)}^{1-2/q}\\
	\lesssim& C_\epsilon R^{\epsilon} \|f\|_{L^2(\Omega)}^{2/q} \|f\|_{L^\infty(\Omega)}^{1-2/q},	 
\end{align*}
where have used  Plancherel's theorem.
The remainder can be estimated using \eqref{decay}:
\begin{align*}
 \sum_{|z|\gg 1}\left(\sum_{y} \|\mathcal{E} f_{y}\|_{L^p(Q_{y-z})}^p\right)^{1/p}
 	\lesssim& R^{n+(n+1)/p}\sum_{|z|\gg 1} (R|z|)^{-N}\left(\sum_{y} \|f_{y}\|_1^p\right)^{1/p}\\
 	\lesssim& R^{-N'} \left(\sum_{y} \|f_{y}\|_1^p\right)^{1/p},
\end{align*}
which finishes the proof because $\|f_{y}\|_1\leq \|f_{y}\|_{2}^{2/q} \|f_{y}\|_{\infty}^{1-2/q},$ so that  from here we can proceed as before.

\end{proof}

\bigskip

\thispagestyle{empty}

\renewcommand{\refname}{References}

\end{document}